\documentclass[12pt,reqno]{amsart}
\usepackage{amsmath,amsfonts,amsthm,amsopn,amssymb,mathrsfs,enumerate,color}
\usepackage{amsmath}
\usepackage{comment}

\usepackage{cite,marginnote}
\usepackage{mathtools} 

\usepackage{color,graphicx,enumerate}
\usepackage[colorlinks=true,urlcolor=blue,
citecolor=red,linkcolor=blue,linktocpage,pdfpagelabels,
bookmarksnumbered,bookmarksopen]{hyperref}
\usepackage[english]{babel}

\usepackage[left=2.9cm,right=2.9cm,top=2.8cm,bottom=2.8cm]{geometry}

\numberwithin{equation}{section}

\makeindex
\newtheorem{theorem}{Theorem}[section]
\newtheorem{corollary}{Corollary}[section]
\newtheorem{remark}{Remark}[section]

\newtheorem{definition}{Definition}[section]
\newtheorem{proposition}{Proposition}[section]
\newtheorem{lemma}{Lemma}[section]
\newtheorem{iteration lemma}{iteration Lemma}[section]

\newcommand{\R}{\mathbb R}

\newcommand{\bt}{\begin{theorem}}
\newcommand{\et}{\end{theorem}}
\newcommand{\bl}{\begin{lemma}}
\newcommand{\el}{\end{lemma}}
\newcommand{\bd}{\begin{definition}}
\newcommand{\ed}{\end{definition}}
\newcommand{\bc}{\begin{corollary}}
\newcommand{\ec}{\end{corollary}}
\newcommand{\bp}{\begin{proof}}
\newcommand{\ep}{\end{proof}}
\newcommand{\bx}{\begin{example}}
\newcommand{\ex}{\end{example}}
\newcommand{\bi}{\begin{exercise}}
\newcommand{\ei}{\end{exercise}}
\newcommand{\bo}{\begin{proposition}}
\newcommand{\eo}{\end{proposition}}
\newcommand{\br}{\begin{remark}}
\newcommand{\er}{\end{remark}}
\newcommand{\beq}{\begin{equation}}
\newcommand{\eeq}{\end{equation}}
\newcommand{\ba}{\begin{align}}
\newcommand{\ea}{\end{align}}
\newcommand{\bn}{\begin{enumerate}}
\newcommand{\en}{\end{enumerate}}
\newcommand{\bg}{\begin{align*}}
\newcommand{\bcs}{\begin{cases}}
\newcommand{\ecs}{\end{cases}}

\newcommand{\bean}{\begin{eqnarray*}}
\newcommand{\eean}{\end{eqnarray*}}

\def\R{\mathbb{R}}

\newcommand{\HH}{\mathbb{H}}      

\def\bd{\mathrm{bd}\,}

\newcommand{\Sp}{\mathbb{S}}

\usepackage{amsmath,amssymb,amsthm,mathtools}
\usepackage{hyperref}

\usepackage{tikz}
\usetikzlibrary{arrows.meta,decorations.pathreplacing,positioning}

\theoremstyle{remark}

\title[Sobolev--Escobar bridge inequality]{Stability of the Sobolev--Escobar bridge inequality}
	
\author[S.~Fan]{Song Fan}
\author[G-D.~Li]{Gui-Dong Li}
\author[J.~J.~Zhang]{Jianjun Zhang}
	
\address[S.~Fan]{\newline\indent School  of  Mathematics  and  Statistics
\newline\indent
Guizhou University
\newline\indent
Guiyang, 550025, Guizhou, PR China}
\email{\href{mailto:doraemonsong77@gmail.com}{doraemonsong77@gmail.com}}

\address[G-D.~Li]{\newline\indent School  of  Mathematics  and  Statistics
\newline\indent
Guizhou University
\newline\indent
Guiyang, 550025, Guizhou, PR China}
\email{\href{mailto:bestdong123@163.com}{bestdong123@163.com}}

\address[J.~J.~Zhang]{\newline\indent College of Mathematics and Statistics
\newline\indent
Chongqing Jiaotong University
\newline\indent
Xuefu, Nan'an, 400074, Chongqing, PR China}
\email{\href{mailto:zhangjianjun09@tsinghua.org.cn}{zhangjianjun09@tsinghua.org.cn}}

\begin{document}

\begin{abstract}
We study the local stability of the bridge family
\[
\Phi(T):=\inf_{u\in\mathcal A_T}\|\nabla u\|_{L^2(\mathbb R^n_+)}, \qquad T>0,\quad n\ge3,
\]
where
\[
\mathcal A_T
:=
\Bigl\{
u\in \dot H^1(\mathbb R^n_+):
\|u\|_{L^{\frac{2n}{n-2}}(\mathbb{R}_{+}^n)}=1,\ \|u\|_{L^{\frac{2(n-1)}{n-2}}(\partial\mathbb{R}_{+}^n)}=T
\Bigr\},
\]
and \(\dot H^1(\mathbb R^n_+)\) is the completion of \(C_c^\infty(\overline{\mathbb R^n_+})\) in the norm \(\|\nabla \varphi\|_{L^2(\mathbb R^n_+)}\).
Let \(\mathcal M_T\) denote the set of minimizers of \(\Phi(T)\). We prove that, for every \(T\neq T_E\), there exists \(\alpha_T>0\) such that
\[
\|\nabla u\|_{L^2(\mathbb{R}_{+}^n)}^2-\Phi(T)^2
\ge \alpha_T\,d_T(u,\mathcal M_T)^2
+o\!\bigl(d_T(u,\mathcal M_T)^2\bigr)
\qquad\text{for all }u\in\mathcal A_T,
\]
where \(T_E\) is the Escobar threshold and \(d_T\) is the distance in \(\dot H^1(\mathbb R^n_+)\).

\vskip0.23in

\noindent{\it   {\bf Key  words:} Quantitative Stability, Sobolev inequality, Trace inequality.}   

\vskip0.1in
\noindent{\it  {\bf 2010 Mathematics Subject Classification:} Primary 46E35; Secondary 35A23.} 
\end{abstract}

\maketitle

\section{Introduction}
Sharp critical Sobolev inequalities play a central role in elliptic PDE,
calculus of variations, and conformal geometry. Besides the determination of
the optimal constants and the classification of extremals, a fundamental
question is quantitative stability: if a function nearly attains equality, must
it be close, modulo the natural symmetries, to the manifold of extremals?

In the present paper we study this question for a family of sharp inequalities
on the half-space
\[
\HH:=\{x=(x_1,x')\in \R\times \R^{n-1}:x_1>0\},
\qquad
\partial\HH:=\{x_1=0\},
\qquad n\ge 3,
\]
We write \(\dot H^1(\HH)\) for the Sobolev space endowed with
\(\|u\|_{\dot H^1(\HH)}:=\|\nabla u\|_{L^2(\HH)}\).
The two endpoint models are classical. The first is the sharp Sobolev
inequality on \(\mathbb R^n\),
\begin{equation}\label{eq:intro-Sobolev}
\|\nabla \varphi\|_{L^2(\R^n)}^2
\ge
S_n\,\|\varphi\|_{L^{2^*}(\R^n)}^2
\qquad\text{for all }\varphi\in \dot H^1(\R^n), \qquad 2^*=\frac{2n}{n-2}.
\end{equation}
The optimal constant and the extremals in \eqref{eq:intro-Sobolev} were identified by Aubin--Talenti \cite{Aubin76,Talenti76}. 
A quantitative stability result for \eqref{eq:intro-Sobolev} was later established by Bianchi--Egnell \cite{BE91}.
The second endpoint is the boundary-critical Sobolev trace inequality on
\(\HH\),
\begin{equation}\label{eq:intro-Escobar}
\|\nabla \varphi\|_{L^2(\HH)}^2
\ge
E_n\,\|\varphi\|_{L^{2^\#}(\partial\HH)}^2
\qquad\text{for all }\varphi\in \dot H^1(\HH), \qquad 2^\#=\frac{2(n-1)}{n-2}.
\end{equation}
The sharp form of this inequality was proved by Escobar \cite{Escobar88}.
Quantitative stability for the Escobar inequality \eqref{eq:intro-Escobar} and related trace inequalities has also 
been investigated recently; see for example \cite{Ho22,ZZZ25,BCB26}.

The Sobolev--Escobar bridge inequality couples these two critical quantities \eqref{eq:intro-Sobolev}--\eqref{eq:intro-Escobar}. 
In the Hilbertian case \(p=2\), it is given by the doubly constrained minimization problem
\begin{equation}\label{eq:intro-Phi}
\Phi(T)=
\inf_{u\in\mathcal A_T}\|\nabla u\|_{L^2(\HH)},
\qquad T>0,
\end{equation}
where
\begin{equation}\label{eq:intro-AT}
\mathcal A_T
=
\Bigl\{
u\in \dot H^1(\HH):
\|u\|_{L^{2^*}(\HH)}=1,
\ \|u\|_{L^{2^\#}(\partial\HH)}=T
\Bigr\}.
\end{equation}
The endpoint \(T=0\) corresponds to the sharp Sobolev inequality \eqref{eq:intro-Sobolev},  
\(T=T_E>0\) recovers the sharp Escobar trace inequality \eqref{eq:intro-Escobar}.
Thus \eqref{eq:intro-Phi} furnishes a one-parameter interpolation between the bulk-critical and boundary-critical problems.
The bridge problem \eqref{eq:intro-Phi} is already well understood at the level of existence and classification of extremals. In the general \(p\)-case,
this was established by Maggi--Neumayer \cite{MN17}, while in the conformally invariant case \(p=2\) the classification goes back to
Carlen--Loss \cite{CL94}. For further results on the bridge problem, see also
Maggi--Neumayer--Tomasetti \cite{MNT23}.

We are interested  in the quantitative stability of \eqref{eq:intro-Phi}. When \(T\neq T_E\), 
no Bianchi--Egnell type quantitative stability theorem \cite{BE91} seems to be available for the bridge problem \eqref{eq:intro-Phi},
 even in the Hilbert case \(p=2\). At the special value \(T=T_E\) and for \(p=2\), the bridge problem
\eqref{eq:intro-Phi} recovers the Escobar regime \eqref{eq:intro-Escobar},
and the corresponding stability was proved in \cite{Ho22}.
The present paper is concerned with the case of \(T\neq T_E\).

The main new difficulty is that, away from the endpoint \(T=T_E\), \eqref{eq:intro-Phi} no longer arises 
as a perturbation of a single scale-invariant inequality, but instead as a doubly constrained family of minimizers.
In particular, the second variation must be analyzed on the linearized constraint space \eqref{eq:XU-def},
 modulo the symmetry directions generated by tangential translations and critical dilations. 
The key issue, therefore, is to prove that the linearized operator is coercive transverse to the minimizing manifold.

Our approach is to reduce this coercivity problem to a Robin spectral problem on a model ball. 
After writing  \(\psi=U\hat\phi\), where \(U\) is a bridge minimizer, the ground-state transform converts the
Hessian into a quadratic form \eqref{eq:QU-metric} for the conformal metric \(g_U=U^{4/(n-2)}|dx|^2\). 
Because the \(p=2\) bridge extremals are explicitly classified, this conformal metric is isometric to a geodesic ball in \(\mathbb S^n\) when \(0<T<T_E\), 
and to a geodesic ball in \(\mathbb H^n\) when \(T>T_E\); see Subsection~\ref{subsec:model}. 
In these variables, the linearized equation becomes a Robin problem on a bounded model space; see Proposition~\ref{prop:model-reduction},
and the symmetry-generated Jacobi fields become explicit \(\ell=1\) modes; see Appendix \ref{sec:appendix}.
The proof is then reduced to a kernel identification and spectral-gap argument for the corresponding Robin quadratic form.

For \(\lambda>0\) and \(z\in\R^{n-1}\), define the natural action
\[
\bigl((\lambda,z)\cdot u\bigr)(x_1,x')
:=
\lambda^{\frac{n-2}{2}}
u\bigl(\lambda x_1,\lambda(x'-z)\bigr).
\]
This action preserves the Dirichlet norm, the \(L^{2^*}(\HH)\)-norm, and the \(L^{2^\#}(\partial\HH)\)-norm. 
Fix \(T>0\), and let \(U_T\in\mathcal A_T\) be a positive
minimizer. By the classification of the half-space bridge minimizers due to
Carlen--Loss \cite{CL94} in the conformal case \(p=2\), and Maggi--Neumayer
\cite{MN17} in the general case, the full minimizing set is
\[
\mathcal M_T
:=
\{\,\pm(\lambda,z)\cdot U_T:\ \lambda>0,\ z\in\R^{n-1}\,\}.
\]
For \(u\in\mathcal A_T\), we define the distance to the minimizing manifold by
\[
d_T(u)
:=
\inf_{v\in\mathcal M_T}\|\nabla(u-v)\|_{L^2(\HH)},
\]
and the bridge deficit by
\[
\delta_T(u)
:=
\|\nabla u\|_{L^2(\HH)}^2-\Phi(T)^2.
\]

Our main result is a local Bianchi--Egnell type estimate.

\begin{theorem}
\label{thm:local-stability}
Fix \(T>0\) with \(T\neq T_E\). Then there exists \(\alpha_T>0\) such that
\[
\delta_T(u)
\ge
\alpha_T\,d_T(u)^2
+
o\!\bigl(d_T(u)^2\bigr)
\qquad
\text{as } u\in\mathcal A_T,\ d_T(u)\to0 .
\]
\end{theorem}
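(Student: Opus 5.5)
We follow the standard Bianchi--Egnell scheme, adapted to two constraints. Take \(u\in\mathcal A_T\) with \(d_T(u)\) small. By compactness of minimizing sequences modulo the action \((\lambda,z)\) and the classification of \(\mathcal M_T\), the infimum defining \(d_T(u)\) is attained at some \(v=\pm(\lambda,z)\cdot U_T\). Since the action preserves all three functionals, we may normalize \(v=U:=U_T\) and set \(u=U+\psi\) with \(\|\nabla\psi\|_{L^2}=d_T(u)\to0\). Minimality of the distance gives \(\int\nabla\psi\cdot\nabla Z=0\) for every \(Z\) in the tangent space \(T_U\mathcal M_T=\mathrm{span}\{\partial_\lambda U,\partial_{x_j}U\ (j=2,\dots,n)\}\).

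We then use the Euler--Lagrange equation with two Lagrange multipliers. \(U\) solves \(-\Delta U=\mu U^{2^*-1}\) in \(\HH\) and \(-\partial_{x_1}U=\nu U^{2^\#-1}\) on \(\partial\HH\), with \(\Phi(T)^2=\mu+\nu T^{2^\#}\) by testing against \(U\). Expand the two constraints to second order:
\[
0=\|U+\psi\|_{2^*}^{2^*}-1=2^*\!\int U^{2^*-1}\psi+\tfrac{2^*(2^*-1)}{2}\int U^{2^*-2}\psi^2+o(\|\psi\|^2),
\]
and similarly on the boundary. Inserting these into
\[
\|\nabla(U+\psi)\|^2-\Phi^2=2\int\nabla U\cdot\nabla\psi+\|\nabla\psi\|^2
\]
and using the equation to rewrite the linear term as \(2\mu\int U^{2^*-1}\psi+2\nu\int_{\partial}U^{2^\#-1}\psi\), we obtain
\[
\delta_T(u)=Q_U(\psi)+o(\|\nabla\psi\|^2),\qquad Q_U(\psi)=\int|\nabla\psi|^2-\mu(2^*-1)\int U^{2^*-2}\psi^2-\nu(2^\#-1)\int_{\partial\HH}U^{2^\#-2}\psi^2 .
\]
The same expansions show that \(\psi\) lies within \(o(\|\psi\|)\) of the linearized constraint space \(X_U=\{\int U^{2^*-1}\psi=0=\int_\partial U^{2^\#-1}\psi\}\). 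We project \(\psi\) onto \(X_U\cap(T_U\mathcal M_T)^\perp\); the error this introduces is \(o(\|\psi\|^2)\) in \(Q_U\). This reduces the theorem to coercivity:
\[
Q_U(\phi)\ge\alpha_T\|\nabla\phi\|^2\quad\text{for }\phi\in X_U\cap(T_U\mathcal M_T)^\perp .
\]

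To prove coercivity we use the ground-state substitution \(\phi=U\hat\phi\) and pass to the conformal metric \(g_U\). Proposition~\ref{prop:model-reduction} turns \(Q_U\) into a Robin quadratic form on a geodesic ball \(B_R\) in \(\Sp^n\) (when \(T<T_E\)) or in \(\HH^n\) (when \(T>T_E\)). Its form is
\[
\int_{B_R}|\nabla\hat\phi|^2-a\int_{B_R}\hat\phi^2-b\int_{\partial B_R}\hat\phi^2 ,
\]
with constants \(a,b\) fixed by \(\mu,\nu\). Everything is rotationally symmetric, so we decompose \(\hat\phi\) in spherical harmonics of degree \(\ell\) on \(\partial B_R\). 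Each mode gives a one-dimensional Sturm--Liouville problem in the geodesic radius with a Robin condition at \(r=R\).

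The mode analysis goes as follows.
\begin{itemize}
\item \(\ell=0\): the negative directions are spanned by \(\hat\phi=1\), which corresponds to \(\phi=U\), together with at most one further radial function. The two constraints in \(X_U\) exclude both, given that \(U\) is a minimizer with two multipliers; here we must show the second radial eigenvalue is positive after removing the two-dimensional constrained directions.
\item \(\ell=1\): the kernel should be exactly the Jacobi fields from dilations and tangential translations described in Appendix~\ref{sec:appendix}. The conformal Killing-type fields generating ball rotations are not admissible, because they fail the Robin condition.
\item \(\ell\ge2\): the form is positive by monotonicity of the Robin eigenvalues in \(\ell\).
\end{itemize}

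We expect the main obstacle to be the exact kernel identification: showing that the zero mode in \(\ell=0\) is absent and that the \(\ell=1\) kernel is exactly \(n\)-dimensional. This is precisely where \(T\neq T_E\) enters. At \(T_E\) the ball degenerates to the half-space and an extra degeneracy appears. We would attack this with explicit hypergeometric/Legendre solutions of the radial ODE, regular at the center, and compare the Robin parameter \(b\) with the logarithmic derivative at \(R\).

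Finally, the strict gap follows from discreteness of the spectrum on the compact model ball. The form is bounded by the \(H^1\) norm through the Sobolev and trace inequalities, so the quadratic-form coercivity in the \(g_U\) variables transfers back to \(\|\nabla\phi\|^2\) by the standard compactness and contradiction argument, which gives \(\alpha_T>0\).
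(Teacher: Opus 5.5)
Your outline follows the paper's route step for step. The shared steps are: nearest point with $\dot H^1$-orthogonality to $T_U\mathcal M_T$; the two-multiplier expansion $\delta_T(u)=Q_U(\psi)+o(\|\nabla\psi\|_{L^2}^2)$; correction of $\psi$ into $\mathcal X_U\cap(T_U\mathcal M_T)^{\perp}$ at negligible cost; the ground-state transform to the Robin form on a geodesic ball; separation into $\ell$-sectors; and the compactness/contradiction argument for the gap. The genuine gap is the one step that carries the real content, namely $\ker(\mathcal Q|\mathcal H_0)=\operatorname{span}\{\phi_0,\phi_2,\dots,\phi_n\}$. You only announce it as ``the main obstacle'', with a plan (hypergeometric solutions, comparing $b$ with a logarithmic derivative) that you never carry out. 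Without it, the final contradiction argument cannot conclude that the weak limit vanishes. Your $\ell=1$ bullet also rests on a wrong picture: rotations of the model ball about its center are isometries of $g_U$ that fix $\hat\phi\equiv1$, so they produce no Jacobi field, and there is nothing to exclude by the Robin condition. Likewise, $T\neq T_E$ enters the paper only through the choice of a curved model ($\kappa_t\neq0$); at $T_E$ the model is a flat ball, not a half-space.

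The missing idea is a dimension count, which makes special functions unnecessary.
\begin{itemize}
\item For $\ell\ge1$ both constraints hold automatically. The Frobenius analysis at $r=0$ (indicial roots $\ell$ and $-(\ell+n-2)$, the second not in $H^1$) shows that the $H^1$-solutions of the sector ODE form a one-dimensional space. Hence the $\ell=1$ part of the kernel has dimension at most $d_1=n$ (Lemma~\ref{lem:sector-dim}).
\item The $n$ transported Jacobi fields $(Z_j/U)\circ F^{-1}$ are nonzero multiples of ambient coordinate functions, i.e.\ of $s_\kappa(r)\theta_\alpha$, and they are linearly independent. So they exhaust the $\ell=1$ kernel, and the Robin condition holds for them automatically because they come from symmetries.
\item Since $\ell=1$ modes lie in $\mathcal H_0$, minimality gives $\mathcal Q_1\ge0$. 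Then $\mathcal Q_{\ell+1}(f)-\mathcal Q_\ell(f)=(2\ell+n-1)\int_0^R s_\kappa^{n-3}f^2\,dr>0$ excludes $\ell\ge2$.
\item At $\ell=0$, nonnegativity is already given by minimality, so what must be ruled out is a zero mode, not ``negative directions''. A radial element of $\mathcal H_0$ has $f(R)=0$ and zero mean. A zero of $\mathcal Q$ on $\mathcal H_0$ satisfies the equations only modulo Lagrange constants: $-\Delta f-n\kappa f=c_1$ and $f'(R)-\beta f(R)=c_2$. So the Robin condition does not give $f'(R)=0$, which is what the appendix's shortcut relies on.
\end{itemize}
For $\ell=0$, your explicit-solution instinct is exactly the right fix. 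The regular solutions are $f=-\frac{c_1}{n\kappa}+a\,s_\kappa'(r)$, and the two conditions reduce to $a\,h(R)=0$ with $h(R)=s_\kappa(R)^n-n\,s_\kappa'(R)\int_0^R s_\kappa^{n-1}\,dr$. Since $h(0)=0$ and $h'(R)=n\kappa\,s_\kappa(R)\int_0^R s_\kappa^{n-1}\,dr\neq0$, this forces $a=c_1=0$, hence $f\equiv0$.
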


\begin{remark}
In the first version of this paper, we left open the question whether there
exists a constant \(C_T>0\) such that
\begin{equation}
  \label{eq:stability-global}
  \delta_T(u)\ge C_T d_T(u)^2
\qquad\text{for all }u\in\mathcal A_T
\end{equation}
This question has since been answered affirmatively by Neumayer
\cite{Neumayer26}. More precisely, Neumayer proved a qualitative stability theorem
for the Sobolev--Escobar bridge inequality: if \(u_k\in\mathcal A_T\) and
\(\delta_T(u_k)\to0\), then \(d_T(u_k)\to0\). In the conformal case \(p=2\),
combining this qualitative compactness result with
Theorem~\ref{thm:local-stability} gives \eqref{eq:stability-global}
for some constant \(C_T>0\).
\end{remark}

\section{Reduction to a Robin problem}

\subsection{Euler--Lagrange equation and second variation}
In this section \(U\) denotes a positive bridge minimizer. The negative
component is obtained by the sign change \(U\mapsto -U\), and will not be
distinguished in the notation.

Let \(0<U\in\mathcal A_T\) be a minimizer of
\(E(u)=\int_{\HH}|\nabla u|^2\,dx\).
Set \(G(u)=\int_{\HH}|u|^{2^*}\,dx\), \(H(u)=\int_{\partial\HH}|u|^{2^\#}\,dS\).
Since \(U\) is a constrained minimizer, there exist \(\mu,\eta\in\R\) such that \(U\) is a critical point of
\begin{equation}\label{eq:L-equation}
 \mathcal L(u):=E(u)-\mu G(u)-\eta H(u).
\end{equation}
Moreover, \(U\) satisfies
\begin{equation}\label{eq:EL-bridge}
\left\{
\begin{aligned}
-\Delta U &= \lambda\,U^{2^*-1} &&\text{in }\HH,\\
\partial_\nu U &= \sigma\,U^{2^\#-1} &&\text{on }\partial\HH,
\end{aligned}
\right.
\end{equation}
where
\(\lambda=\frac{\mu\,2^*}{2}\), \(\sigma=\frac{\eta\,2^\#}{2}\).
The linearized constraint space at \(U\) is
\begin{equation}\label{eq:XU-def}
\mathcal X_U
:=
\left\{
\psi\in \dot H^1(\HH):
\int_{\HH}U^{2^*-1}\psi\,dx=0,\ 
\int_{\partial\HH}U^{2^\#-1}\psi\,dS=0
\right\}.
\end{equation}

The Hessian of the Lagrangian at \(U\) is the quadratic form
\begin{equation}\label{eq:QU-def}
Q_U(\psi)
=
\int_{\HH}|\nabla\psi|^2\,dx
-\lambda(2^*-1)\int_{\HH}U^{2^*-2}\psi^2\,dx
-\sigma(2^\#-1)\int_{\partial\HH}U^{2^\#-2}\psi^2\,dS .
\end{equation}

We next use the standard ground-state transform \(\psi=U\hat\phi\); see, for example,
\cite[Lemma~2.4]{PT06}. Since
\[
|\nabla(U\hat\phi)|^2
=
U^2|\nabla\hat\phi|^2+\nabla U\cdot \nabla(U\hat\phi^2),
\]
an integration by parts, together with \eqref{eq:EL-bridge}, yields
\begin{equation}\label{eq:ground-trans}
\int_{\HH}|\nabla\psi|^2\,dx
=
\int_{\HH}U^2|\nabla\hat\phi|^2\,dx
+\lambda\int_{\HH}U^{2^*}\hat\phi^2\,dx
+\sigma\int_{\partial\HH}U^{2^\#}\hat\phi^2\,dS .
\end{equation}
Substituting \eqref{eq:ground-trans} into \eqref{eq:QU-def} and using
\(\psi=U\hat\phi\), we obtain
\begin{equation}\label{eq:QU-conj}
Q_U(\psi)
=
\int_{\HH}U^2|\nabla\hat\phi|^2\,dx
-(2^*-2)\lambda\int_{\HH}U^{2^*}\hat\phi^2\,dx
-\beta\int_{\partial\HH}U^{2^\#}\hat\phi^2\,dS,
\quad
\beta=(2^\#-2)\sigma.
\end{equation}

\subsection{The conformal model}\label{subsec:model}

Assume from now on that \(T\neq T_E\), and define
\(g_U:=U^{\frac{4}{n-2}}|dx|^2\) on \(\HH\).
If \(\psi\in C_c^\infty(\overline{\HH})\) and \(\hat\phi=\psi/U\), then
\begin{equation*}
dV_{g_U}=U^{2^*}\,dx,
\qquad
dS_{g_U}=U^{2^\#}\,dS,
\qquad
\int_{\HH}U^2|\nabla\hat\phi|^2\,dx
=
\int_{\HH}|\nabla_{g_U}\hat\phi|^2\,dV_{g_U}.
\end{equation*}
Hence \eqref{eq:QU-conj} becomes
\begin{equation}\label{eq:QU-metric}
Q_U(\psi)
=
\int_{\HH}|\nabla_{g_U}\hat\phi|^2\,dV_{g_U}
-(2^*-2)\lambda\int_{\HH}\hat\phi^2\,dV_{g_U}
-\beta\int_{\partial\HH}\hat\phi^2\,dS_{g_U}.
\end{equation}

We now recall the explicit conformal models in the two nondegenerate branches.

\textbf{The spherical branch \(0<T<T_E\).}
After a tangential translation and a critical dilation,
\begin{equation}\label{eq:sphere}
U(x)=C_t^{\mathrm S}\,(1+|x-te_1|^2)^{-\frac{n-2}{2}}
\end{equation}
for some \(t\in\R\) and \(C_t^{\mathrm S}>0\), where \(t\) and \(C_t^{\mathrm S}\) are determined by the two
constraints \eqref{eq:intro-AT};  see Carlen--Loss \cite{CL94} and Maggi--Neumayer \cite{MN17}.  Writing \(y:=x-te_1\), one has
\[
g_U
=
\bigl(C_t^{\mathrm S}\bigr)^{\frac{4}{n-2}}(1+|y|^2)^{-2}|dy|^2.
\]
If \(\Pi_S^{-1}:\R^n\to \Sp^n\setminus\{N\}\subset\R^{n+1}\) denotes the inverse stereographic projection from the north pole \(N\), namely
\[
\Pi_S^{-1}(y)
=
\left(
\frac{2y}{1+|y|^2},
\frac{|y|^2-1}{1+|y|^2}
\right),
\qquad y\in\R^n,
\]
see for example \cite[Section~4.4]{LL01}, then
\[
(\Pi_S^{-1})^*g_{\Sp^n}
=
\frac{4}{(1+|y|^2)^2}|dy|^2.
\]
Thus,
\[
g_U=\alpha_t^{\mathrm S}(\Pi_S^{-1})^*g_{\Sp^n},
\qquad
\alpha_t^{\mathrm S}=\frac{(C_t^{\mathrm S})^{4/(n-2)}}{4}.
\]
Therefore \(\Pi_S^{-1}\) identifies \((\{y_1>-t\},g_U)\) with a geodesic ball in \((\Sp^n,\alpha_t^{\mathrm S}g_1)\). 
After composing with a suitable rotation \(O_t\in O(n+1)\), we obtain an isometry
\[
F_t^{\mathrm S}:(\HH,g_U)\longrightarrow (B_1(R_t),\,\alpha_t^{\mathrm S}g_1),
\]
where \(B_1(R_t)\subset \Sp^n\) denotes the geodesic ball of radius \(R_t\) centered at the north pole.

\textbf{The hyperbolic branch \(T>T_E\).}
After a tangential translation and a critical dilation,
\begin{equation}\label{eq:hyperbolic}
U(x)=C_t^{\mathrm H}\,(|x-te_1|^2-1)^{-\frac{n-2}{2}}
\end{equation}
for some \(t<-1\) and \(C_t^{\mathrm H}>0\), again determined by the two constraints \eqref{eq:intro-AT}; see
\cite{CL94,MN17}. Writing \(y=x-te_1\),
\[
g_U
=
\bigl(C_t^{\mathrm H}\bigr)^{\frac{4}{n-2}}(|y|^2-1)^{-2}|dy|^2.
\]
If \(\Pi_H^{-1}:\{y\in\R^n:\ |y|>1\}\to \mathbb H^n\subset\R^{n+1}\) denotes the inverse hyperbolic stereographic projection, namely
\[
\Pi_H^{-1}(y)
=
\left(
\frac{2y}{|y|^2-1},
\frac{|y|^2+1}{|y|^2-1}
\right),
\qquad |y|>1,
\]
see for example \cite[Section~8.8]{CFKP97}, then
\[
(\Pi_H^{-1})^*g_{\mathbb H^n}
=
\frac{4}{(|y|^2-1)^2}|dy|^2.
\]
Thus,
\[
g_U=\alpha_t^{\mathrm H}(\Pi_H^{-1})^*g_{\mathbb H^n},
\qquad
\alpha_t^{\mathrm H}=\frac{(C_t^{\mathrm H})^{4/(n-2)}}{4}.
\]
Therefore \(\Pi_H^{-1}\) identifies \((\{y_1>-t\},g_U)\) with a geodesic ball in \((\mathbb H^n,\alpha_t^{\mathrm H}g_{-1})\).
 After composing with a suitable Lorentz isometry \(L_t\), we obtain an isometry
\[
F_t^{\mathrm H}:(\HH,g_U)\longrightarrow (B_{-1}(R_t),\,\alpha_t^{\mathrm H}g_{-1}),
\]
where
\(
B_{-1}(R_t)\subset\mathbb H^n
\)
denotes the geodesic ball of radius \(R_t\) centered at a fixed base point \(o=e_{n+1}\).

In both cases we write
\[
\vartheta=
\begin{cases}
1,&0<T<T_E,\\
-1,&T>T_E,
\end{cases}
\qquad
\tilde g_t:=\alpha_t g_{\vartheta},
\qquad
\kappa_t:=\frac{\vartheta}{\alpha_t},
\]
and let
\begin{equation}\label{eq:reduction-map}
F:(\HH,g_U)\longrightarrow (B_{\vartheta}(R_t),\tilde g_t)
\end{equation}
denote the corresponding isometry. Since \(U\) solves \eqref{eq:EL-bridge}, the scalar-curvature
formula for the conformal metric \(g_U\) yields
\begin{equation}\label{eq:lambda-k}
\lambda=\frac{n(n-2)}{4}\,\kappa_t,
\qquad
(2^*-2)\lambda=n\kappa_t.
\end{equation}
Substituting \eqref{eq:lambda-k} into \eqref{eq:QU-metric}, we obtain
\begin{equation}\label{eq:QU-metric-k}
Q_U(\psi)
=
\int_{\HH}|\nabla_{g_U}\hat\phi|^2\,dV_{g_U}
-
n\kappa_t\int_{\HH}\hat\phi^2\,dV_{g_U}
-
\beta\int_{\partial\HH}\hat\phi^2\,dS_{g_U}.
\end{equation}

\subsection{Reduction to the Robin problem on the model ball}

\begin{proposition}\label{prop:model-reduction}
Assume \(T\neq T_E\). For \(\psi\in C_c^\infty(\overline{\HH})\), set
\(
\hat\phi=\frac{\psi}{U},
\phi=\hat\phi\circ F^{-1}.
\)
Then
\[
Q_U(\psi)=Q_t(\phi),
\]
where
\begin{equation}\label{eq:QU-ball}
Q_t(\phi)= 
\int_{B_{\vartheta}(R_t)}|\nabla_{\tilde g_t}\phi|^2\,dV_{\tilde g_t}
-
n\kappa_t\int_{B_{\vartheta}(R_t)}\phi^2\,dV_{\tilde g_t}
-
\beta\int_{\partial B_{\vartheta}(R_t)}\phi^2\,dS_{\tilde g_t}.
\end{equation}
Moreover, the associated bilinear form 
\(\mathcal B_t(\phi,\zeta)=0\) for all \(\zeta\in H^1(B_{\vartheta}(R_t))\) if and only if \(\phi\) is a weak
solution of
\begin{equation}\label{eq:Robin}
\left\{
\begin{aligned}
-\Delta_{\tilde g_t}\phi &= n\kappa_t\,\phi &&\text{in }B_{\vartheta}(R_t),\\
\partial_{\nu_{\tilde g_t}}\phi &= \beta\,\phi &&\text{on }\partial B_{\vartheta}(R_t).
\end{aligned}
\right.
\end{equation}
Moreover, the linearized constraints \eqref{eq:XU-def} are transformed into
\begin{equation}\label{eq:ball-constraints}
\int_{B_{\vartheta}(R_t)}\phi\,dV_{\tilde g_t}=0,
\qquad
\int_{\partial B_{\vartheta}(R_t)}\phi\,dS_{\tilde g_t}=0.
\end{equation}
\end{proposition}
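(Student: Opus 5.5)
The plan is to transport each term of \eqref{eq:QU-metric-k} through the isometry \(F\) of \eqref{eq:reduction-map}. Then derive the Euler--Lagrange characterization from the bilinear form, and finally transform the two constraints in \eqref{eq:XU-def} using \(\psi=U\hat\phi\) together with the volume and area identities \(dV_{g_U}=U^{2^*}dx\), \(dS_{g_U}=U^{2^\#}dS\).

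\textbf{Step 1 (identity of quadratic forms).} Since \(F:(\HH,g_U)\to(B_\vartheta(R_t),\tilde g_t)\) is a Riemannian isometry, it pushes \(dV_{g_U}\) to \(dV_{\tilde g_t}\). It extends to the boundary, and it carries \(\partial\HH\) onto \(\partial B_\vartheta(R_t)\), with the induced measures \(dS_{g_U}\) going to \(dS_{\tilde g_t}\). It also intertwines the gradients: \(|\nabla_{g_U}\hat\phi|_{g_U}=|\nabla_{\tilde g_t}\phi|_{\tilde g_t}\circ F\) for \(\phi=\hat\phi\circ F^{-1}\). Changing variables in each of the three integrals of \eqref{eq:QU-metric-k} then gives \(Q_U(\psi)=Q_t(\phi)\) at once.

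One point needs care. On the spherical branch, \(\partial\HH\) under \(\Pi_S^{-1}\) is a round sphere minus possibly the north pole, the image of infinity. This single point has measure zero on both the bulk and the boundary, so it does not affect the integrals. I would also note that \(\hat\phi=\psi/U\) is smooth up to the boundary on the image. Indeed, \(\psi\) has compact support, and \(U\) is smooth and positive on \(\overline{\HH}\), because \(t<0\) in the spherical case and \(t<-1\) in the hyperbolic case keep the singular set away. Hence \(\phi\in H^1(B_\vartheta(R_t))\), and the compactly supported \(\psi\) correspond to functions vanishing near one boundary point at most.

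\textbf{Step 2 (critical points).} The form \(\mathcal B_t(\phi,\zeta)\) is
\[
\int\langle\nabla\phi,\nabla\zeta\rangle_{\tilde g_t}\,dV_{\tilde g_t}
-n\kappa_t\int\phi\zeta\,dV_{\tilde g_t}
-\beta\int_{\partial}\phi\zeta\,dS_{\tilde g_t}.
\]
Its vanishing for all \(\zeta\in H^1\) is, by definition, the weak formulation of \eqref{eq:Robin}. For the converse, take \(\zeta\in C_c^\infty\) of the interior to get the interior equation. Then apply Green's formula on the smooth bounded domain, with elliptic regularity giving enough smoothness for the normal derivative to make sense weakly. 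The remaining boundary term then yields \(\partial_\nu\phi=\beta\phi\).

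\textbf{Step 3 (constraints).} We have \(\int_{\HH}U^{2^*-1}\psi\,dx=\int U^{2^*}\hat\phi\,dx=\int\hat\phi\,dV_{g_U}\), which equals \(\int_{B}\phi\,dV_{\tilde g_t}\) by Step 1. Similarly, \(\int_{\partial\HH}U^{2^\#-1}\psi\,dS=\int\hat\phi\,dS_{g_U}=\int_{\partial B}\phi\,dS_{\tilde g_t}\). This gives \eqref{eq:ball-constraints}.

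\textbf{Main obstacle.} The only delicate point is justifying that \(F\) maps the boundary \(\partial\HH\) isometrically, with matching induced measures, onto \(\partial B_\vartheta(R_t)\), and handling the point at infinity. I would verify the first explicitly, using the conformal factor \(U^{2/(n-2)}\) restricted to the boundary, which is exactly what makes \(U^{2^\#}dS\) the induced \((n-1)\)-measure. For the point at infinity, I would argue by density that removing a single point does not change \(H^1\) on the ball, since \(n\ge3\) and points have zero capacity. This justifies extending the identities from \(C_c^\infty(\overline{\HH})\) to all of \(H^1(B_\vartheta(R_t))\) when this is needed later.
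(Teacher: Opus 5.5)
Your proposal is correct and follows the paper's proof. The paper does exactly your three steps: it transports the three terms of \eqref{eq:QU-metric-k} through the isometry $F$, reads \eqref{eq:Robin} as the Euler--Lagrange equation of $\mathcal B_t$, and rewrites the constraints as $\int\hat\phi\,dV_{g_U}=\int\hat\phi\,dS_{g_U}=0$ before transporting them. Your checks that the induced boundary measure is $U^{2^\#}dS$ and that the single missing boundary point $F(\infty)$ is negligible fill in details the paper leaves implicit; note that this missing point occurs in both branches, not only the spherical one. Also, the restriction $t<0$ on the spherical branch is unnecessary and not what the paper assumes, since that profile is smooth and positive on all of $\mathbb R^n$ for every $t$.
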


\begin{proof}
The identity \(Q_U(\psi)=Q_t(\phi)\) follows immediately from
\eqref{eq:QU-metric-k} by transporting the three terms through the isometry \(F\). The weak
Robin \eqref{eq:Robin} is exactly the Euler--Lagrange equation associated with the bilinear form \(\mathcal B_t\).
Finally, since \(\psi=U\hat\phi\), the linearized constraints become
\begin{equation}\label{eq:const-F}
\int_{\HH}\hat\phi\,dV_{g_U}=0,
\qquad
\int_{\partial\HH}\hat\phi\,dS_{g_U}=0.
\end{equation}
Transporting \eqref{eq:const-F} by \(F\) gives \eqref{eq:ball-constraints}.
\end{proof}

\section{Proof of Theorem~\ref{thm:local-stability}}
For a fixed \(U\in\mathcal M_T\),  one has
\[
T_U\mathcal M_T=\operatorname{span}\{Z_0,Z_2,\dots,Z_n\},
\]
where
\begin{equation}\label{eq:tangent-all}
Z_0=\frac{n-2}{2}U+x\cdot\nabla U,
\qquad
Z_i=\partial_{x_i}U,\quad i=2,\dots,n.
\end{equation}

\begin{lemma}\label{lem:nearest}
There exists \(\varepsilon_T>0\) such that whenever
\(u\in\mathcal A_T\),
\(d_T(u)<\varepsilon_T\),
the infimum
\[
d_T(u)^2=\inf_{U\in\mathcal M_T}\|\nabla(u-U)\|_{L^2(\HH)}^2
\]
is attained at some \(U\in\mathcal M_T\). Moreover, if \(u=U+\psi\) with \(U\) a nearest point, then
\begin{equation}\label{eq:orth-D12}
\psi\perp_{\dot H^1} T_U\mathcal M_T,
\end{equation}
where \(T_U\mathcal M_T\) denotes the tangent space to the connected component of
\(\mathcal M_T\) containing \(U\).
\end{lemma}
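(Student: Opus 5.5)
We prove the lemma by a standard compactness argument in the parameters \((\lambda,z)\), followed by a first-variation computation at the minimizer.

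\textbf{Reduction to a finite-dimensional problem.} Fix \(u\in\mathcal A_T\) with \(d_T(u)<\varepsilon_T\); we choose \(\varepsilon_T\) below. Write \(\mathcal M_T=\mathcal M_T^+\cup\mathcal M_T^-\) with \(\mathcal M_T^\pm=\{\pm(\lambda,z)\cdot U_T\}\). Since \(\|\nabla U_T\|_{L^2}=\Phi(T)>0\), the two components are at positive distance from each other. Indeed, \(\|\nabla((\lambda,z)\cdot U_T+(\mu,w)\cdot U_T)\|\ge c>0\), because the sum of two positive functions has \(L^{2^*}\) norm at least \(1\) and hence, by Sobolev, Dirichlet norm bounded below. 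So if \(\varepsilon_T\) is small, only one component, say \(\mathcal M_T^+\), comes within \(2\varepsilon_T\) of \(u\). It suffices to minimize
\[
f(\lambda,z):=\|\nabla(u-(\lambda,z)\cdot U_T)\|_{L^2}^2
=\|\nabla u\|^2+\Phi(T)^2-2\langle u,(\lambda,z)\cdot U_T\rangle_{\dot H^1}
\]
over \((0,\infty)\times\R^{n-1}\). The map \((\lambda,z)\mapsto(\lambda,z)\cdot U_T\) is continuous into \(\dot H^1\), by density of \(C_c^\infty\) and norm preservation, so \(f\) is continuous.

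\textbf{Coercivity at infinity (the main step).} We show that \(\langle v,(\lambda,z)\cdot U_T\rangle_{\dot H^1}\to0\) for every fixed \(v\in\dot H^1(\HH)\) as \(\lambda\to0\), \(\lambda\to\infty\), or \(|z|\to\infty\). That is, the group action converges weakly to \(0\) along any sequence leaving compact sets of the parameter space. We prove this first for \(v\in C_c^\infty(\overline{\HH})\), writing \(\langle v,w\rangle=\int\nabla v\cdot\nabla w\) and using that the rescaled bubble concentrates or spreads, so its gradient mass on any fixed compact set tends to \(0\) (explicit decay of \(\nabla U_T\) from \eqref{eq:sphere}/\eqref{eq:hyperbolic}). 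The general case follows by density and the uniform bound \(\|\nabla((\lambda,z)\cdot U_T)\|=\Phi(T)\).

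Hence \(\limsup f\le\|\nabla u\|^2+\Phi(T)^2\) along escaping parameters. On the other hand, \(\inf f=d_T(u)^2<\varepsilon_T^2\). We also have \(\|\nabla u\|^2+\Phi(T)^2\ge 2\Phi(T)^2\), since \(u\in\mathcal A_T\). Choosing \(\varepsilon_T^2<\Phi(T)^2\), any minimizing sequence \((\lambda_k,z_k)\) stays in a compact subset of \((0,\infty)\times\R^{n-1}\). A subsequence converges, and by continuity of \(f\) the infimum is attained at some \(U=(\lambda_0,z_0)\cdot U_T\).

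\textbf{Orthogonality.} At the interior minimum, \(\partial_\lambda f=\partial_{z_j}f=0\). Since \(\|\nabla((\lambda,z)\cdot U_T)\|\) is constant, this gives \(\langle u,\partial_\lambda((\lambda,z)\cdot U_T)\rangle=0\) and \(\langle u,\partial_{z_j}((\lambda,z)\cdot U_T)\rangle=0\) at \((\lambda_0,z_0)\). Computing the derivatives, we get
\[
\partial_\lambda\big|_{\lambda_0}=\lambda_0^{-1}\Bigl(\tfrac{n-2}{2}U+(x-z_0')\cdot\nabla U\Bigr),
\qquad
\partial_{z_j}=-\partial_{x_j}U,
\]
where \(z_0'=(0,z_0)\). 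These span \(T_U\mathcal M_T\) as in \eqref{eq:tangent-all}, after subtracting multiples of the \(Z_i\) to recenter. Differentiability of these curves in \(\dot H^1\) follows from the explicit decay of \(U_T\) and its derivatives: \(x\cdot\nabla U_T\in\dot H^1\) with the needed difference-quotient convergence.

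Finally, differentiating \(\|\nabla((\lambda,z)\cdot U_T)\|^2=\text{const}\) shows \(\langle U,Z\rangle=0\) for all \(Z\in T_U\mathcal M_T\). Therefore \(\langle\psi,Z\rangle=\langle u,Z\rangle-\langle U,Z\rangle=0\) with \(\psi=u-U\), which is \eqref{eq:orth-D12}.
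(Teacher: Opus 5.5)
Your proof follows the same route as the paper's. You reduce to one sign component, show that a minimizing sequence of parameters cannot escape because $(\lambda_k,z_k)\cdot U_T\rightharpoonup0$ in $\dot H^1(\HH)$, conclude by continuity of $f$, and read off the orthogonality from the first-order conditions. Your threshold, which uses only $\|\nabla u\|_{L^2}\ge\Phi(T)$, is slightly cleaner than the paper's normalization $\mathscr F_u(1,0)\le 4d_T(u)^2$. You also try to justify the weak vanishing that the paper merely asserts, but that justification is wrong in one regime.

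\textbf{The concentration regime.} Suppose $\lambda_k\to\infty$ with $z_k$ bounded. Then $w_k:=(\lambda_k,z_k)\cdot U_T$ concentrates at a boundary point $(0,z_0)$. On a compact $K\subset\overline{\HH}$ containing a neighbourhood of that point, its gradient mass tends to $\Phi(T)^2$, not to $0$. So Cauchy--Schwarz against $\nabla v\in L^2(K)$ gives nothing. The vanishing is still true, but for a different reason. Since $|\nabla U_T(y)|\lesssim(1+|y|)^{1-n}$ on $\HH$, and $\lambda_k(K-(0,z_k))$ lies in a ball of radius $\lambda_k\operatorname{diam}K$, one gets
\[
\|\nabla w_k\|_{L^1(K)}
=\lambda_k^{-n/2}\int_{\lambda_k(K-(0,z_k))}|\nabla U_T|\,dy
\lesssim\lambda_k^{1-n/2}\longrightarrow0 .
\]
Hence $|\langle v,w_k\rangle_{\dot H^1}|\le\|\nabla v\|_{L^\infty}\|\nabla w_k\|_{L^1(\operatorname{supp}v)}\to0$ for every $v\in C_c^\infty(\overline{\HH})$. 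Alternatively, use $w_k\to0$ a.e.\ together with boundedness in $\dot H^1$. Your $L^2$-mass argument is fine in the other two regimes: $\lambda_k\to0$, where the relevant set has measure $\lambda_k^n|K|\to0$, and $\lambda_k$ bounded with $|z_k|\to\infty$.

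\textbf{The inequality direction.} The bound $\limsup f\le\|\nabla u\|^2+\Phi(T)^2$ points the wrong way for your contradiction; you need a lower bound along escaping parameters. What the weak vanishing actually gives is
\[
f(\lambda_k,z_k)\to\|\nabla u\|^2+\Phi(T)^2\ge2\Phi(T)^2>d_T(u)^2
\]
along any escaping subsequence, and that contradicts minimality.

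With these two repairs the argument is complete.
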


\begin{proof}
Since
\(\mathcal M_T=\mathcal M_T^+\cup(-\mathcal M_T^+)\), where
\[
\mathcal M_T^+:=
\{(\lambda,z)\cdot U_T:\lambda>0,\ z\in\R^{n-1}\},
\]
and since \(u\mapsto -u\) preserves \(\mathcal A_T\) and \(d_T\), we may, after replacing
\(u\) by \(-u\) if necessary, assume that the positive component realizes the distance to
\(\mathcal M_T\). Let
\[
\Xi:(0,\infty)\times\R^{n-1}\to \dot H^1(\HH),
\qquad
\Xi(\lambda,z):=(\lambda,z)\cdot U_T,
\]
so that \(\mathcal M_T^+=\Xi((0,\infty)\times\R^{n-1})\). For fixed \(u\in\mathcal A_T\), define
\[
\mathscr F_u(\lambda,z):=\|\nabla(u-\Xi(\lambda,z))\|_{L^2(\HH)}^2.
\]
Then, in the present sign component,
\[
d_T(u)^2=\inf_{(\lambda,z)\in(0,\infty)\times\R^{n-1}}\mathscr F_u(\lambda,z).
\]
Choose \(U_*=\Xi(\lambda_*,z_*)\in\mathcal M_T^+\) such that
\(
\|\nabla(u-U_*)\|_{L^2(\HH)}\le 2d_T(u)
\).
Using the invariance of \(\mathcal A_T\), \(\mathcal M_T\), and \(d_T\) under tangential translations and
critical dilations, we may replace \(u\) by \((\lambda_*,z_*)^{-1}\cdot u\). Thus it is enough to treat the
case
\(
\|\nabla(u-U_T)\|_{L^2(\HH)}\le 2d_T(u)
\),
and hence
\begin{equation}\label{eq:F<4d2}
\mathscr F_u(1,0)\le 4d_T(u)^2.
\end{equation}

Let \((\lambda_k,z_k)\) be a minimizing sequence for \(\mathscr F_u\). We claim that, provided
\(d_T(u)\) is sufficiently small, the sequence \((\lambda_k,z_k)\) remains in a compact subset of
\((0,\infty)\times\R^{n-1}\).
Indeed, if \(|z_k|\to\infty\), then the translated profiles drift away tangentially, and therefore
\begin{equation}\label{eq:vanish}
\int_{\HH}\nabla u\cdot \nabla \Xi(\lambda_k,z_k)\,dx\to0. 
\end{equation}
Since \(\|\nabla\Xi(\lambda_k,z_k)\|_{L^2(\HH)}=\Phi(T)\), it follows that
\[
\mathscr F_u(\lambda_k,z_k)
=
\|\nabla u\|_{L^2(\HH)}^2+\Phi(T)^2
-2\int_{\HH}\nabla u\cdot \nabla \Xi(\lambda_k,z_k)\,dx
\to
\|\nabla u\|_{L^2(\HH)}^2+\Phi(T)^2.
\]
Since \(u\in\mathcal A_T\), one has \(\|\nabla u\|_{L^2(\HH)}\ge \Phi(T)\), and if \(d_T(u)\to0\) then
\(\|\nabla u\|_{L^2(\HH)}\to\Phi(T)\). Hence, for \(d_T(u)\) sufficiently small,
\[
\|\nabla u\|_{L^2(\HH)}^2+\Phi(T)^2 > 4d_T(u)^2,
\]
which contradicts \eqref{eq:F<4d2}. Therefore \((z_k)\) is bounded.
Similarly, if \(\lambda_k\to0\) or \(\lambda_k\to\infty\), then \eqref{eq:vanish} again holds.
The same contradiction shows that \((\lambda_k)\) stays in a compact subinterval of \((0,\infty)\).

Thus, after passing to a subsequence,
\((\lambda_k,z_k)\to(\lambda_0,z_0)\)
for some \((\lambda_0,z_0)\in(0,\infty)\times\R^{n-1}\). Since \(\Xi\) is continuous as a map into
\(\dot H^1(\HH)\), the function \(\mathscr F_u\) is continuous, and therefore the minimum is attained at
\(U:=\Xi(\lambda_0,z_0)\in\mathcal M_T\).

Finally, if \(u=U+\psi\) and \(U\) is a nearest point, then \(U\) is a critical point of the restriction of
\(W\mapsto \|u-W\|_{\dot H^1(\HH)}^2\)
to the connected component of \(\mathcal M_T\) containing \(U\). Hence its differential vanishes on
\(T_U\mathcal M_T\), which is exactly \eqref{eq:orth-D12}.
This proves the lemma.
\end{proof}

\begin{lemma}
\label{lem:quadratic}
Let \(T>0\), and let \(U\) be a positive bridge minimizer. Then there exist
\(\varepsilon_U,C_U>0\) such that, whenever
\[
\psi\in \dot H^1(\HH),\qquad
\psi\perp_{\dot H^1}T_U\mathcal M_T,\qquad
\|\nabla\psi\|_{L^2(\HH)}\le \varepsilon_U,\qquad
U+\psi\in\mathcal A_T,
\]
there exists
\(W\in \mathcal X_U\cap (T_U\mathcal M_T)^{\perp_{\dot H^1}}\)
such that
such that
\begin{equation}\label{eq:quadratic-est}
\|\nabla(\psi-W)\|_{L^2(\HH)}
\le
C_U\,\|\nabla\psi\|_{L^2(\HH)}^2.
\end{equation}
In particular,
\begin{equation}\label{eq:W-psi}
\|\nabla W\|_{L^2(\HH)}^2
=
\|\nabla\psi\|_{L^2(\HH)}^2
+
o\!\bigl(\|\nabla\psi\|_{L^2(\HH)}^2\bigr)
\qquad\text{as~ }\|\nabla\psi\|_{L^2(\HH)}\to0.
\end{equation}
\end{lemma}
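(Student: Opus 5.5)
The plan is to project \(\psi\) onto the closed subspace \(\mathcal Y_U:=\mathcal X_U\cap (T_U\mathcal M_T)^{\perp_{\dot H^1}}\), correcting only in two explicit directions, and to show the correction is quadratic in \(\|\nabla\psi\|_{L^2(\HH)}\). Consider the two continuous linear functionals on \(\dot H^1(\HH)\)
\[
\ell_1(\varphi)=\int_{\HH}U^{2^*-1}\varphi\,dx,\qquad
\ell_2(\varphi)=\int_{\partial\HH}U^{2^\#-1}\varphi\,dS .
\]
Their boundedness follows from the Sobolev inequality \eqref{eq:intro-Sobolev} and the trace inequality \eqref{eq:intro-Escobar}. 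Let \(\Pi\) denote the \(\dot H^1\)-orthogonal projection onto \((T_U\mathcal M_T)^{\perp}\).

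The first step is to find two vectors \(e_1,e_2\in (T_U\mathcal M_T)^{\perp}\) with \(\ell_i(e_j)=\delta_{ij}\). This amounts to showing that \(\ell_1,\ell_2\) restricted to \((T_U\mathcal M_T)^\perp\) are linearly independent.

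The tangent vectors themselves are annihilated by \(\ell_1,\ell_2\). Indeed, \(\ell_1(Z)=\frac{1}{2^*}\frac{d}{ds}G(\gamma(s))\) along a curve \(\gamma\) in \(\mathcal M_T\) with \(\gamma'(0)=Z\), and \(G\) and \(H\) are constant on \(\mathcal M_T\). So it suffices to check that \(\ell_1,\ell_2\) are linearly independent on all of \(\dot H^1(\HH)\).

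Suppose instead that \(a\ell_1+b\ell_2\equiv0\) with \((a,b)\neq0\). Testing with \(\varphi\) supported in the interior forces \(a=0\), since \(U>0\). Then testing with \(\varphi\) having nonzero trace forces \(b=0\).

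With \(e_1,e_2\) in hand, I set
\[
W:=\psi-\ell_1(\psi)e_1-\ell_2(\psi)e_2 .
\]
Since \(\psi\perp T_U\mathcal M_T\) and the \(e_j\) lie in \((T_U\mathcal M_T)^\perp\), we get \(W\in(T_U\mathcal M_T)^\perp\). By construction \(\ell_i(W)=0\), so \(W\in\mathcal X_U\). Moreover
\[
\|\nabla(\psi-W)\|_{L^2(\HH)}\le C\bigl(|\ell_1(\psi)|+|\ell_2(\psi)|\bigr),
\]
so it remains to prove \(|\ell_i(\psi)|\le C\|\nabla\psi\|^2_{L^2(\HH)}\).

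This is where the constraint \(U+\psi\in\mathcal A_T\) enters. We have \(G(U+\psi)=G(U)=1\) and \(H(U+\psi)=H(U)=T^{2^\#}\). A Taylor expansion of \(s\mapsto|s|^{p}\) with \(p=2^*>2\) gives the pointwise estimate
\[
\bigl||U+\psi|^p-U^p-pU^{p-1}\psi\bigr|\le C_p\bigl(U^{p-2}\psi^2+|\psi|^p\bigr).
\]
Integrating this over \(\HH\) and using Hölder with the Sobolev inequality yields
\[
p\,|\ell_1(\psi)|\le C\bigl(\|\nabla\psi\|^2+\|\nabla\psi\|^{2^*}\bigr)\le C\|\nabla\psi\|^2
\]
for \(\|\nabla\psi\|\le\varepsilon_U\). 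The same argument on \(\partial\HH\) with \(p=2^\#>2\), using the trace inequality, handles \(\ell_2\). This proves \eqref{eq:quadratic-est}.

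Finally, \eqref{eq:W-psi} follows from the reverse triangle inequality. It gives
\[
\bigl|\|\nabla W\|-\|\nabla\psi\|\bigr|\le C\|\nabla\psi\|^2,
\]
and hence
\[
\bigl|\|\nabla W\|^2-\|\nabla\psi\|^2\bigr|\le C\|\nabla\psi\|^2\bigl(2\|\nabla\psi\|+C\|\nabla\psi\|^2\bigr)=o(\|\nabla\psi\|^2).
\]

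The main obstacle is the first step, the independence of the constraint functionals modulo the tangent space. The key observation that tangent vectors lie in \(\ker\ell_1\cap\ker\ell_2\) settles it cleanly. Independence of \(\ell_1\) and \(\ell_2\) on \(\dot H^1(\HH)\) then gives independence on the complement \((T_U\mathcal M_T)^\perp\). Otherwise the estimates are routine Taylor bounds.
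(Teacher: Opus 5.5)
Your proposal is correct and follows essentially the same route as the paper: your vectors $e_1,e_2$ are the images of the standard basis under the paper's bounded right inverse $\mathcal R_U$ of $\mathfrak D|_{(T_U\mathcal M_T)^{\perp}}$. Surjectivity is proved exactly as you do it (the tangent vectors lie in $\ker\ell_1\cap\ker\ell_2$, then you test with interior and boundary functions), and the quadratic bound on $\ell_i(\psi)$ comes from the same pointwise Taylor estimate combined with the Sobolev and trace inequalities. Your justification of $T_U\mathcal M_T\subset\mathcal X_U$ by differentiating $G$ and $H$ along curves in $\mathcal M_T$, and your reverse-triangle derivation of \eqref{eq:W-psi}, simply make explicit two steps that the paper only asserts.
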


\begin{proof}
Set
\(\mathcal C_U=(T_U\mathcal M_T)^{\perp_{\dot H^1}}\),
and define \(\mathfrak D:\dot H^1(\HH)\to\R^2\) by 
\[
\mathfrak D(\varphi)
:=
\left(
\int_{\HH}U^{2^*-1}\varphi\,dx,\ 
\int_{\partial\HH}U^{2^\#-1}\varphi\,dS
\right).
\]
Then \(\mathcal X_U=\ker\mathfrak D\). We claim that
\begin{equation}\label{eq:defect}
|\mathfrak D(\psi)|\le C\,\|\nabla\psi\|_{L^2(\HH)}^2.
\end{equation}
Since \(u=U+\psi\in\mathcal A_T\) and \(U\in\mathcal A_T\), one has
\(\|U+\psi\|_{L^{2^*}(\HH)}=\|U\|_{L^{2^*}(\HH)}\).
Set \(p=2^*\).  The pointwise expansion
\begin{equation}\label{eq:remainder-est}
\bigl||a+b|^p-a^p-pa^{p-1}b\bigr|
\le C\bigl(a^{p-2}|b|^2+|b|^p\bigr),
\qquad a\ge0,\quad b\in\mathbb R,\quad p>2 .
\end{equation}
yields
\[
\left|\,p\int_{\HH}U^{p-1}\psi\,dx\right|
\le
C\int_{\HH}U^{p-2}\psi^2\,dx
+
C\int_{\HH}|\psi|^p\,dx.
\]
By H\"older inequality and the  Sobolev inequality,
\[
\int_{\HH}U^{p-2}\psi^2\,dx
=O(\|\nabla\psi\|_{L^2(\HH)}^2),\quad \int_{\HH}|\psi|^p\,dx=
o(\|\nabla\psi\|_{L^2(\HH)}^2).
\]
Therefore
\begin{equation}\label{eq:bulk-remainder}
\int_{\HH}U^{2^*-1}\psi\,dx
=
O(\|\nabla\psi\|_{L^2(\HH)}^2).
\end{equation}
Similarly, by \eqref{eq:remainder-est} with \(q=2^\#\) and the trace inequality,  we obtain
\begin{equation}\label{eq:boundary-remainder}
\int_{\partial\HH}U^{2^\#-1}\psi\,dS
=
O(\|\nabla\psi\|_{L^2(\HH)}^2).
\end{equation}
Thus, \eqref{eq:bulk-remainder} and \eqref{eq:boundary-remainder} imply \eqref{eq:defect}.

We claim that the restriction \(\mathfrak D|_{\mathcal C_U}:\mathcal C_U\to\R^2\) is surjective. 
By duality, it is enough to show that if a linear functional
\(\ell\in(\R^2)^*\) vanishes on \(\mathfrak D(\mathcal C_U)\), then \(\ell=0\).
Write
\(
\ell(\xi_1,\xi_2)=\alpha \xi_1+\beta \xi_2
\)
for some \((\alpha,\beta)\in\R^2\), and assume that
\[
\alpha \int_{\HH}U^{2^*-1}\varphi\,dx
+
\beta \int_{\partial\HH}U^{2^\#-1}\varphi\,dS
=0
\qquad\text{for all } \varphi\in\mathcal C_U.
\]
Since \(T_U\mathcal M_T\subset \mathcal X_U=\ker\mathfrak D\), the same identity holds trivially for every
\(\varphi\in T_U\mathcal M_T\). Because
\(
\dot H^1(\HH)=\mathcal C_U\oplus T_U\mathcal M_T,
\)
it follows that
\[
\alpha \int_{\HH}U^{2^*-1}\varphi\,dx
+
\beta \int_{\partial\HH}U^{2^\#-1}\varphi\,dS
=0
\qquad\text{for all } \varphi\in\dot H^1(\HH).
\]
Now take \(\varphi\in C_c^\infty(\HH)\). Then the boundary term vanishes, and we obtain
\[
\alpha \int_{\HH}U^{2^*-1}\varphi\,dx=0
\qquad\text{for all } \varphi\in C_c^\infty(\HH).
\]
Since \(U>0\) in \(\HH\), this implies \(\alpha=0\). Therefore
\[
\beta \int_{\partial\HH}U^{2^\#-1}\varphi\,dS=0
\qquad\text{for all } \varphi\in\dot H^1(\HH).
\]
Choosing \(\varphi\) with nontrivial trace on \(\partial\HH\), we conclude that \(\beta=0\). Hence
\(\ell=0\), and therefore \(\mathfrak D|_{\mathcal C_U}\) is surjective.

Since \(\mathfrak D|_{\mathcal C_U}\) is surjective and \(\R^2\) is finite-dimensional, there exists a bounded linear right inverse 
\(\mathcal R_U:\R^2\to\mathcal C_U\). Let \(\tau=\mathcal R_U(\mathfrak D(\psi))\) and \(W=\psi-\tau\). 
Then \(W\in\mathcal C_U\) and \(\mathfrak D(W)=0\), so \(W\in \mathcal X_U\cap (T_U\mathcal M_T)^{\perp_{\dot H^1}}\). 
Furthermore, by \eqref{eq:defect},
\[
\|\nabla(\psi-W)\|_{L^2(\HH)}
=
\|\nabla\tau\|_{L^2(\HH)}
\le
C_U\,|\mathfrak D(\psi)|
\le
C_U\,\|\nabla\psi\|_{L^2(\HH)}^2.
\]
This proves \eqref{eq:quadratic-est}, while \eqref{eq:W-psi} follows immediately.
\end{proof}

\begin{proposition}
\label{prop:spectral-gap}
Assume \(T\neq T_E\). Then there exists \(\Lambda_T>0\) such that, for every
positive minimizer \(U\in\mathcal M_T\) and every
\(W\in \mathcal X_U\cap (T_U\mathcal M_T)^{\perp_{\dot H^1}}\),
one has
\begin{equation}\label{eq:spectral-gap}
Q_U(W)\ge \Lambda_T\,\|\nabla W\|_{L^2(\HH)}^2.
\end{equation}
\end{proposition}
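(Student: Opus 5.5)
We plan to prove \eqref{eq:spectral-gap} by transporting everything to the model ball through Proposition~\ref{prop:model-reduction}. The first step is to reduce to a single minimizer. The action $(\lambda,z)\cdot$ preserves $\|\nabla\cdot\|_{L^2}$, it preserves $Q_U$ (in the sense $Q_{(\lambda,z)\cdot U}((\lambda,z)\cdot W)=Q_U(W)$), and it maps $\mathcal X_U$ and $T_U\mathcal M_T$ to the corresponding objects at $(\lambda,z)\cdot U$. Hence it suffices to prove the estimate for one fixed $U$ of the normalized form \eqref{eq:sphere} or \eqref{eq:hyperbolic}, and $\Lambda_T$ is then automatically uniform.

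Next we set up the functional-analytic framework on the ball. For $W=U\hat\phi$ and $\phi=\hat\phi\circ F^{-1}$, the form $Q_t$ is $\int|\nabla\phi|^2$ minus a compact perturbation on $H^1(B_\vartheta(R_t))$: the ball is bounded and smooth, so the embedding $H^1\hookrightarrow L^2$ and the trace map $H^1\to L^2(\partial B)$ are compact. The map $W\mapsto\phi$ should be an isomorphism between $\dot H^1(\HH)$ and $H^1(B_\vartheta(R_t))$ with equivalent norms. To check this we use \eqref{eq:ground-trans}: it shows that $\|\nabla W\|^2$ equals $\int|\nabla_{\tilde g}\phi|^2+\lambda\int\phi^2+\sigma\int_\partial\phi^2$. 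This quantity is comparable to $\|\phi\|_{H^1}^2$, via Sobolev and trace inequalities for the upper bound and via Hardy/positivity-type arguments for the lower bound; the sign of $\sigma$ needs care in one branch. Once this is in place, it suffices to prove $Q_t(\phi)\ge c\|\phi\|_{H^1}^2$ on the subspace $V$ cut out by \eqref{eq:ball-constraints} together with the transported orthogonality to $T_U\mathcal M_T$.

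The core is a spectral decomposition of the Robin problem. We expand $\phi$ in spherical harmonics about the center of the geodesic ball, writing $\phi=\sum_{\ell,m}f_{\ell m}(r)Y_{\ell m}$. In each mode, $Q_t$ becomes a one-dimensional Sturm--Liouville form on $(0,R_t)$ with a Robin endpoint condition, and its eigenvalues increase in $\ell$ because of the term $\ell(\ell+n-2)/\mathrm{sn}_\vartheta(r)^2$. We then treat the modes in turn.
\begin{itemize}
\item For $\ell=0$, the two constraints \eqref{eq:ball-constraints} remove the problematic radial directions. Minimality of $U$ under two constraints gives $Q_U\ge0$ on $\mathcal X_U$, so the radial form restricted to the constraints is nonnegative. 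Nondegeneracy amounts to showing that no radial solution of \eqref{eq:Robin} satisfies both constraints. Integrating $-\Delta\phi=n\kappa\phi$ gives $\int_\partial\partial_\nu\phi=-n\kappa\int\phi$, hence $\beta\int_\partial\phi=-n\kappa\int\phi$, which is automatically compatible with the constraints. So we need an explicit analysis: the radial solution is a Legendre-type function, and we must verify that it cannot have both zero mean and zero boundary value unless it vanishes. This is precisely where $T\ne T_E$ enters, since at $T_E$ the model degenerates to a flat half-space.
\item For $\ell=1$, the Appendix identifies the Jacobi fields. The transported $Z_0,Z_2,\dots,Z_n$ give the $\ell=1$ solutions of \eqref{eq:Robin}, namely $\cos$-type restrictions of ambient coordinate functions. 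There are exactly $n$ of them, matching $\dim T_U\mathcal M_T$; note that $\ell=1$ has $n$ harmonics and the direction normal to $\partial\HH$ is accounted for by the ball geometry. The $\dot H^1$-orthogonality, converted via \eqref{eq:ground-trans} into $\mathcal B$-type orthogonality, should remove these kernel directions, and positivity of the next radial eigenvalue then follows by Sturm comparison, since a first eigenfunction of the $\ell=1$ radial problem has no interior zero.
\item For $\ell\ge2$, the form is strictly positive by monotonicity in $\ell$ combined with the $\ell=1$ kernel.
\end{itemize}

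Finally, having shown $Q_t>0$ on $V\setminus\{0\}$, we conclude by the standard compactness argument. Take a normalized sequence with $Q_t\to0$; it converges weakly, the compact terms converge, and the limit would be a nonzero element of $V$ with $Q_t\le0$, a contradiction. This yields $\Lambda_T$. We expect the main obstacle to be the $\ell=0$ nondegeneracy and the precise matching of the $\dot H^1$-orthogonality with the kernel. For the latter we would first check that $\dot H^1$-orthogonality to a Jacobi field $Z$ equals $\mathcal B$-orthogonality up to the constraint terms. This holds because $-\Delta Z=\lambda(2^*-1)U^{2^*-2}Z$ and $\partial_\nu Z=\sigma(2^\#-1)U^{2^\#-2}Z$, so $\langle W,Z\rangle_{\dot H^1}$ is a weighted $L^2$ pairing that diagonalizes in the spherical-harmonic basis.
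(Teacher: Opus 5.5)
Your architecture is the paper's. You reduce to one minimizer, transport $Q_U$ to the Robin form $\mathcal Q_t$ on $B:=B_{\vartheta}(R_t)$, and split into spherical-harmonic sectors. You then use monotonicity in $\ell$ for $\ell\ge2$ and the Jacobi fields $\phi_j\propto s_\kappa(r)\theta_\alpha$ for $\ell=1$, and close by weak compactness.

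Your sector-by-sector treatment of $\ell=1$ is a legitimate variant of the paper's route. In it, the orthogonality conditions act on the individual $\ell=1$ profiles, and $s_\kappa>0$ on $(0,R)$ is the simple ground state of the radial $\ell=1$ Robin problem. The paper instead identifies the whole kernel as $\operatorname{span}\{\phi_0,\phi_2,\dots,\phi_n\}$ via Frobenius, and notes that a limit $W_\infty\in T_U\mathcal M_T\cap(T_U\mathcal M_T)^{\perp_{\dot H^1}}$ vanishes.

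One correction in your $\ell=1$ step: each $Z_j$ is a Jacobi field, so $\mathcal B_U(W,Z_j)=0$ for \emph{every} $W$, and orthogonality with respect to $\mathcal B_U$ removes nothing. What actually does the work is the identity $\langle W,Z_j\rangle_{\dot H^1}=\lambda(2^*-1)\int_B\phi\,\phi_j\,dV_{\tilde g_t}+\sigma(2^\#-1)\int_{\partial B}\phi\,\phi_j\,dS_{\tilde g_t}$. This pairing is nondegenerate on the kernel direction because at $\phi=\phi_j$ it equals $\|\nabla Z_j\|_{L^2(\HH)}^2>0$.

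The genuine gap is the $\ell=0$ sector, which you leave open and also formulate incorrectly. Suppose a radial $\phi\in\mathcal H_{0,t}$ has $\mathcal Q_t(\phi)=0$. Nonnegativity of $\mathcal Q_t$ on $\mathcal H_{0,t}$ then only gives $\mathcal B_t(\phi,\zeta)=0$ for $\zeta\in\mathcal H_{0,t}$. That is, $-\Delta_{\tilde g_t}\phi-n\kappa_t\phi=a$ in $B$ and $\partial_\nu\phi-\beta\phi=b$ on $\partial B$, with Lagrange constants $a,b$. Testing with $\zeta\equiv1$ yields only $a|B|+b|\partial B|=0$; your integration identity is just the case $a=b=0$. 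So ruling out radial solutions of the homogeneous problem \eqref{eq:Robin} satisfying both constraints is not enough.

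That homogeneous case needs no Legendre functions anyway. There $f(R)=0$ and $f'(R)=\beta f(R)$ give zero Cauchy data, which is the paper's one-line proof of \eqref{eq:non-ell=0}. The appendix therefore makes the same tacit assumption $a=b=0$.

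The $\ell=0$ step is nevertheless true, for a different reason. For radial $\phi=f(r)$ the boundary constraint forces $f(R)=0$. The boundary term then drops, and $\mathcal Q_t(\phi)=\int_B|\nabla_{\tilde g_t}\phi|^2-n\kappa_t\int_B\phi^2$ with $\phi\in H^1_0(B)$ of mean zero.
For $\vartheta=-1$ this is trivially positive. For $\vartheta=1$, extend $\phi$ by zero to the whole sphere of curvature $\kappa_t$. It keeps mean zero, so $\int|\nabla\phi|^2\ge n\kappa_t\int\phi^2$, with equality only for restrictions of linear functions. These cannot vanish on the nonempty open complement of the cap. Hence $\mathcal Q_t>0$ on nonzero radial elements of $\mathcal H_{0,t}$, and your compactness argument then goes through.

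Two smaller points.
First, the norm comparison needs no Hardy-type argument. H\"older with the Sobolev and trace inequalities on $\HH$ bounds $\int_B\phi^2\,dV_{\tilde g_t}=\int_{\HH}U^{2^*-2}W^2\,dx$ and its boundary analogue by $\|\nabla W\|_{L^2(\HH)}^2$, and \eqref{eq:W-ground} then controls the gradient. For the final inequality you only need $\|\nabla W\|_{L^2(\HH)}^2\lesssim\|\nabla_{\tilde g_t}\phi\|_{L^2}^2$ on $\mathcal H_{0,t}$, which the paper gets from Poincar\'e and trace.
Second, $T\neq T_E$ enters through the boundedness of the model ball (compact embeddings), not specifically in the $\ell=0$ sector.
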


\begin{proof}
By symmetry invariance, it is enough to work at one fixed positive minimizer
\(U\in\mathcal M_T\).
Set 
\[
\mathcal H_{0,t}
:=
\left\{
\phi\in H^1(B_{\vartheta}(R_t)):
\eqref{eq:ball-constraints}~ \text{holds}
\right\}, \quad \mathcal T_U(W):=\Bigl(\frac{W}{U}\Bigr)\circ F^{-1}.
\]
By Proposition~\ref{prop:model-reduction}, \(\mathcal T_U\) identifies \(\mathcal X_U\) 
with \(\mathcal H_{0,t}\), and \(Q_U\) with \(\mathcal Q_t\). By Proposition~\ref{prop:kernel-identify},
\begin{equation}\label{eq:kernel-phi}
\ker(\mathcal Q_t|\mathcal H_{0,t})
=
\operatorname{span}\{\phi_0,\phi_2,\dots,\phi_n\}.
\end{equation}
We now set
\[
\mathcal Z_{t,U}
=
\left\{
\phi\in \mathcal H_{0,t}:
\Lambda_j(\phi)=0,\ j\in\{0,2,\dots,n\}
\right\},
\]
where
\(\Lambda_j(\phi):=\bigl\langle U(\phi\circ F),\,Z_j\bigr\rangle_{\dot H^1(\HH)}\).
Since each \(\Lambda_j\) is a continuous linear functional on \(H^1(B_{\vartheta}(R_t))\),
the space \(\mathcal Z_{t,U}\) is weakly closed. It is therefore enough to prove that 
there exists \(\mu_T>0\) such that
\begin{equation}\label{eq:coercive-z}
\mathcal Q_t(\phi)\ge \mu_T\|\nabla_{\tilde g_t}\phi\|_{L^2(B_{\vartheta}(R_t))}^2
\qquad
\qquad\text{for all } \phi\in \mathcal Z_{t,U}.
\end{equation}
Assume by contradiction that \eqref{eq:coercive-z} fails. Then there exists
\((\varphi_m)\subset \mathcal Z_{t,U}\) such that
\begin{equation}\label{eq:contradict-setting}
\|\nabla_{\tilde g_t}\varphi_m\|_{L^2(B_{\vartheta}(R_t))}=1,
\qquad
\mathcal Q_t(\varphi_m)\to 0.
\end{equation}
Since \(\varphi_m\in \mathcal H_{0,t}\), the Poincar\'e inequality on the bounded connected
Riemannian domain \((B_{\vartheta}(R_t),\tilde g_t)\), see \cite[Theorem~2.10]{Hebey99}, gives
\[
\|\varphi_m\|_{L^2(B_{\vartheta}(R_t),dV_{\tilde g_t})}
\le C_T\|\nabla_{\tilde g_t}\varphi_m\|_{L^2(B_{\vartheta}(R_t),dV_{\tilde g_t})}
= C_T.
\]
Thus \((\varphi_m)\) is bounded in \(H^1(B_{\vartheta}(R_t))\). Passing to a subsequence,
\[
\varphi_m\rightharpoonup \varphi_\infty
\quad\text{weakly in }H^1(B_{\vartheta}(R_t)),
\]
\begin{equation}\label{eq:compact-strong}
\varphi_m\to \varphi_\infty
\quad\text{strongly in }L^2(B_{\vartheta}(R_t)) \cap L^2(\partial B_{\vartheta}(R_t)).
\end{equation}
Since \(\mathcal Z_{t,U}\) is weakly closed in \(\mathcal H_{0,t}\), \(\varphi_\infty\in \mathcal Z_{t,U}\).
By weak lower semicontinuity of the gradient term and \eqref{eq:compact-strong},
\(\mathcal Q_t(\varphi_\infty)\le \liminf_{m\to\infty}\mathcal Q_t(\varphi_m)=0\).
Because \(U\) is a constrained minimizer, one has \(\mathcal Q_t\ge 0\) on \(\mathcal H_{0,t}\). 
Hence \(\mathcal Q_t(\varphi_\infty)=0\),
so \(\varphi_\infty\in \ker(\mathcal Q_t|\mathcal H_{0,t})\). 

By \eqref{eq:kernel-phi},
\(\varphi_\infty=\sum_{j\in\{0,2,\dots,n\}} a_j\phi_j\). Set \(W_\infty:=U(\varphi_\infty\circ F)\).
Since \(\phi_j=\bigl(\frac{Z_j}{U}\bigr)\circ F^{-1}\), we have
\[
W_\infty=\sum_{j\in\{0,2,\dots,n\}} a_j Z_j \in T_U\mathcal M_T.
\]
But \(\varphi_\infty\in \mathcal Z_{t,U}\) implies that
\(W_\infty\in (T_U\mathcal M_T)^{\perp_{\dot H^1}}\).
Therefore \(W_\infty=0\), and so \(\varphi_\infty=0\).
Returning to \eqref{eq:QU-ball}, together with \eqref{eq:compact-strong} and 
\(\|\nabla_{\tilde g_t}\varphi_m\|_{L^2(B_{\vartheta}(R_t))}=1\),
gives \(\mathcal Q_t(\varphi_m)\to 1\), contradicting \eqref{eq:contradict-setting}. 
This proves \eqref{eq:coercive-z}.

Finally, let
\(W\in \mathcal X_U\cap (T_U\mathcal M_T)^{\perp_{\dot H^1}}\),
\(\phi:=\mathcal T_U(W)\in \mathcal Z_{t,U}\).
Then by \eqref{eq:coercive-z},
\[
Q_U(W)=\mathcal Q_t(\phi)\ge \mu_T\|\nabla_{\tilde g_t}\phi\|_{L^2(B_{\vartheta}(R_t))}^2.
\]
Moreover, by \eqref{eq:ground-trans} and \eqref{eq:reduction-map},
\begin{equation}\label{eq:W-ground}
\|\nabla W\|_{L^2(\HH)}^2
=
\int_{B_{\vartheta}(R_t)}|\nabla_{\tilde g_t}\phi|^2\,dV_{\tilde g_t}
+\lambda\int_{B_{\vartheta}(R_t)}\phi^2\,dV_{\tilde g_t}
+\sigma\int_{\partial B_{\vartheta}(R_t)}\phi^2\,dS_{\tilde g_t}.
\end{equation}
Since \(\phi\in\mathcal H_{0,t}\), Poincar\'e and the trace inequality yield
\begin{equation}\label{eq:pincare-trace}
\int_{B_{\vartheta}(R_t)}\phi^2\,dV_{\tilde g_t}
+\int_{\partial B_{\vartheta}(R_t)}\phi^2\,dS_{\tilde g_t}
\le C_T \int_{B_{\vartheta}(R_t)}|\nabla_{\tilde g_t}\phi|^2\,dV_{\tilde g_t}.
\end{equation}
Together \eqref{eq:W-ground} and \eqref{eq:pincare-trace}, with \(\lambda,\sigma\) fixed by \(T\), prove
\eqref{eq:spectral-gap}. This completes the proof.
\end{proof}

\subsection{Proof of Theorem~\ref{thm:local-stability}}
\begin{proof}
  By the sign reduction discussed above, we may assume that the nearest point
\(U\) is positive. Let \(u\in\mathcal A_T\) with \(d_T(u)\) sufficiently small,
and choose such a nearest point \(U\in\mathcal M_T\) as in
Lemma~\ref{lem:nearest}. Writing \(u=U+\psi\), we have
\[
\|\nabla\psi\|_{L^2(\HH)}=d_T(u),
\qquad
\psi\perp_{\dot H^1}T_U\mathcal M_T.
\]
Since \(u,U\in\mathcal A_T\), the constraint terms in the Lagrangian cancel. 
Hence Taylor expansion at \(U\), together with
\eqref{eq:L-equation} and \eqref{eq:QU-def}, gives
\begin{equation}\label{eq:Taylor-delta}
\delta_T(u)=Q_U(\psi)+o(\|\nabla\psi\|_{L^2(\HH)}^2).
\end{equation}
By Lemma~\ref{lem:quadratic}, there exists
\(W\in \mathcal X_U\cap (T_U\mathcal M_T)^{\perp_{\dot H^1}}\) such that
\[
\|\nabla(\psi-W)\|_{L^2(\HH)}=O(\|\nabla\psi\|_{L^2(\HH)}^2),
\qquad
\|\nabla W\|_{L^2(\HH)}^2
=
\|\nabla\psi\|_{L^2(\HH)}^2
+
o(\|\nabla\psi\|_{L^2(\HH)}^2).
\]
If \(\mathcal B_U\) is the bilinear form associated with \(Q_U\), then
\[
Q_U(\psi)-Q_U(W)=\mathcal B_U(\psi-W,\psi+W)
=o(\|\nabla\psi\|_{L^2(\HH)}^2).
\]
Therefore, by Proposition~\ref{prop:spectral-gap},
\[
\delta_T(u)
=
Q_U(W)+o(\|\nabla\psi\|_{L^2(\HH)}^2)
\ge
\Lambda_T\|\nabla W\|_{L^2(\HH)}^2
+
o(\|\nabla\psi\|_{L^2(\HH)}^2).
\]
Using \eqref{eq:W-psi} and \(\|\nabla\psi\|_{L^2(\HH)}=d_T(u)\), we obtain
\[
\delta_T(u)
\ge
\Lambda_T d_T(u)^2+o(d_T(u)^2).
\]
This proves the theorem.
\end{proof}

\appendix

\section{Kernel identification on the reduced side}\label{sec:appendix}
Throughout this appendix, \(U\in\mathcal A_T\) is a positive \(p=2\) bridge
minimizer with \(T\neq T_E\). By Proposition~\ref{prop:model-reduction}, the
kernel problem on \(\mathcal X_U\) is reduced to the Robin kernel problem on a
model ball. For simplicity we write this model as
\((B_\kappa(R),g_\kappa)\), where \(\kappa=\kappa(U)\neq0\) is the constant
sectional curvature.
More precisely, under the change of variables
\(\phi=\Bigl(\frac{\psi}{U}\Bigr)\circ F^{-1}\),
where \(F\) is given by \eqref{eq:reduction-map},
the quadratic form \(Q_U\) is transformed into the reduced quadratic form
\begin{equation}\label{eq:def-Qphi}
\mathcal Q(\phi)
=
\int_{B_\kappa(R)}|\nabla_{g_\kappa}\phi|^2\,dV_{g_\kappa}
-
n\kappa\int_{B_\kappa(R)}\phi^2\,dV_{g_\kappa}
-
\beta\int_{\partial B_\kappa(R)}\phi^2\,dS_{g_\kappa},
\end{equation}
and the linearized constraints become
\[
\mathcal H_0
:=
\left\{
\phi\in H^1(B_\kappa(R)):
\int_{B_\kappa(R)}\phi\,dV_{g_\kappa}=0,\ 
\int_{\partial B_\kappa(R)}\phi\,dS_{g_\kappa}=0
\right\}.
\]
Thus it is enough to identify \(\ker(\mathcal Q|\mathcal H_0)\).

We write
\[
\phi_j:=\left(\frac{Z_j}{U}\right)\circ F^{-1},
\qquad j\in\{0,2,\dots,n\},
\]
where  \(Z_j\) are defined in \eqref{eq:tangent-all}.
Since these fields  \(Z_j\) arise from tangential translations and critical dilations, they belong to
\(\ker(Q_U|\mathcal X_U)\), and hence
\begin{equation}\label{eq:phi-in-ker}
\phi_0,\phi_2,\dots,\phi_n\in \ker(\mathcal Q|\mathcal H_0).
\end{equation}
The reduced kernel equation is
\begin{equation}\label{eq:Robin-kernel}
\left\{
\begin{aligned}
-\Delta_{g_\kappa}\phi-\kappa n\,\phi&=0 &&\text{in }B_\kappa(R),\\
\partial_\nu\phi-\beta\phi&=0 &&\text{on }\partial B_\kappa(R),
\end{aligned}
\right.
\qquad
\phi\in\mathcal H_0 .
\end{equation}

In geodesic polar coordinates, one has, see for example \cite[Example~1.4.6]{Petersen16},
\begin{equation}\label{eq:geodesic-corr}
g_\kappa=dr^2+s_\kappa(r)^2g_{\Sp^{n-1}},
\end{equation}
where 
\begin{equation*}
 s_\kappa(r):=
\begin{cases}
\dfrac{1}{\sqrt{\kappa}}\sin(\sqrt{\kappa}\,r),&\kappa>0,\\
r,&\kappa=0,\\
\dfrac{1}{\sqrt{-\kappa}}\sinh(\sqrt{-\kappa}\,r),&\kappa<0.
\end{cases}
\end{equation*}

We analyze the reduced Robin problem \eqref{eq:Robin-kernel} by separation of variables in the geodesic polar coordinates \eqref{eq:geodesic-corr}. 
Expanding \(\phi\) in spherical harmonics on \(\Sp^{n-1}\), see for example \cite[Theorem~2.38 and Proposition~3.5]{AH12},
\[
\phi(r,\theta)=\sum_{\ell=0}^\infty\sum_{m=1}^{d_\ell}f_{\ell,m}(r)Y_{\ell,m}(\theta),
\qquad
-\Delta_{\Sp^{n-1}}Y_{\ell,m}=\ell(\ell+n-2)Y_{\ell,m},
\]
where \(d_\ell\) denotes the dimension of the space of spherical harmonics of degree \(\ell\) on \(\Sp^{n-1}\), namely
\begin{equation}\label{eq:dimension-sph}
d_\ell=\frac{(2\ell+n-2)(\ell+n-3)!}{\ell!(n-2)!}.
\end{equation}
By orthogonality, \eqref{eq:def-Qphi} decomposes as
\(\mathcal Q(\phi)=\sum_{\ell,m}\mathcal Q_\ell(f_{\ell,m})\),
where
\[
\mathcal Q_\ell(f)
=
\int_0^R
\left(
|f'|^2+\frac{\ell(\ell+n-2)}{s_\kappa(r)^2}f^2-\kappa n\,f^2
\right)
s_\kappa(r)^{n-1}\,dr
-\beta s_\kappa(R)^{n-1}f(R)^2.
\]
Moreover,
\begin{equation}\label{eq:increasing-Q}
\mathcal Q_{\ell+1}(f)-\mathcal Q_\ell(f)
=
(2\ell+n-1)\int_0^R s_\kappa(r)^{n-3}f(r)^2\,dr>0
\end{equation}
for every nonzero \(f\).

\begin{lemma}\label{lem:symmetry-l1}
Each \(\phi_j\) lies in the \(\ell=1\) angular
sector. Moreover, \(\phi_0,\phi_2,\dots,\phi_n\) are linearly independent.
\end{lemma}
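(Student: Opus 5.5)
Since \(\phi_j=(Z_j/U)\circ F^{-1}\) and \(Z_j/U\) are explicit, the plan is to compute each \(\phi_j\) in closed form on the model and recognise it as a first-order spherical harmonic in the geodesic polar coordinates \eqref{eq:geodesic-corr} centred at the centre of \(B_\kappa(R)\).

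\textbf{Step 1: compute the quotients.} Write \(U=C\,w(y)^{-\frac{n-2}{2}}\) with \(y=x-te_1\), where \(w(y)=1+|y|^2\) in the spherical branch and \(w(y)=|y|^2-1\) in the hyperbolic branch. Differentiating gives
\[
\frac{Z_i}{U}=-(n-2)\frac{y_i}{w(y)},\qquad i=2,\dots,n,
\]
and
\[
\frac{Z_0}{U}=\frac{n-2}{2}-(n-2)\frac{x\cdot y}{w(y)},\qquad x\cdot y=|y|^2+t\,y_1 .
\]
For \(\vartheta=1\) this equals \(\frac{n-2}{2}\cdot\frac{1-|y|^2-2ty_1}{1+|y|^2}\). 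For \(\vartheta=-1\) it equals \(-\frac{n-2}{2}\cdot\frac{1+|y|^2+2ty_1}{|y|^2-1}\).

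\textbf{Step 2: identify them as restrictions of linear functions.} Compare with the coordinates of \(\Pi_S^{-1}\) and \(\Pi_H^{-1}\), namely \(X_a=2y_a/w\) for \(a\le n\), and \(X_{n+1}=(|y|^2\mp1)/w\) in the spherical case or \((|y|^2+1)/w\) in the hyperbolic case. Each \(Z_j/U\) is then the restriction of a linear function \(\langle v_j,X\rangle\) on \(\R^{n+1}\). The vectors are
\[
v_i=-\tfrac{n-2}{2}e_i\quad(i\ge2),\qquad v_0=-\tfrac{n-2}{2}\bigl(t e_1+e_{n+1}\bigr)
\]
in the spherical case, and \(v_0=-\tfrac{n-2}{2}(te_1+e_{n+1})\) with the Minkowski pairing in the hyperbolic case. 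The constant \(\frac{n-2}{2}\) is absorbed exactly, which is the key algebraic check.

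\textbf{Step 3: show \(\ell=1\) and independence.} The centre of the ball is \(p_0=O_t^{-1}N\) (respectively \(L_t^{-1}o\)). Since the boundary \(\partial\HH\), i.e. \(y_1=-t\), is the hyperplane section \(\langle X,te_1+e_{n+1}\rangle=\mathrm{const}\) in suitable form, \(p_0\) is proportional to \(te_1+e_{n+1}\), normalised. Hence \(v_0\) is parallel to \(p_0\), and the \(v_i\) for \(i\ge2\) are orthogonal to \(p_0\) (Euclidean, respectively Minkowski).

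In polar coordinates about \(p_0\) one has \(X=c_\kappa(r)p_0+s_\kappa(r)\theta\) with \(\theta\perp p_0\), where \(c_\kappa=\cos(\sqrt\kappa\, r)\) or \(\cosh(\sqrt{-\kappa}\,r)\) up to the scaling \(\alpha_t\). Then \(\langle v,X\rangle=c_\kappa(r)\langle v,p_0\rangle+s_\kappa(r)\langle v,\theta\rangle\). For \(i\ge2\) this gives \(\phi_i=s_\kappa(r)\,Y_i(\theta)\) with \(Y_i\) a degree-one harmonic.

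The trouble is \(\phi_0\): it would come out radial, i.e. \(\ell=0\), contradicting the claim. So the position of the centre must be rechecked. If instead \(p_0\perp v_0\), because the boundary hyperplane is \(\langle X,\nu\rangle=\mathrm{const}\) with \(\nu\) differing from \(v_0\), then \(\phi_0\) is also degree one. I expect this geometric identification to be the main obstacle. I would settle it by computing the boundary sphere \(\{y_1=-t\}\) explicitly on \(\Sp^n\) and \(\mathbb H^n\): its defining normal is \(\nu=e_1\pm t e_{n+1}\), so the centre is \(p_0\propto \nu\).

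With that centre, \(\langle v_i,p_0\rangle=0\) for \(i\ge2\). For \(v_0\) the pairing is \(\langle te_1+e_{n+1},e_1+te_{n+1}\rangle\), which is \(2t\) in the spherical case and \(t-t=0\) in the Minkowski case. So one further correction is needed in the spherical case. I would verify whether the \(\ell=0\) component of \(\phi_0\) is actually forced to vanish by the constraint \(\int\phi_0\,dV=0\), or recompute \(v_0\) more carefully; this computation is where I expect the difficulty to lie.

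Granted that all \(\phi_j\) equal \(s_\kappa(r)\langle w_j,\theta\rangle\) with \(w_j\in p_0^\perp\), linear independence reduces to the independence of the \(n\) vectors \(w_0,w_2,\dots,w_n\) in the \(n\)-dimensional space \(p_0^\perp\). The vectors \(w_2,\dots,w_n\) are \(e_2,\dots,e_n\), and \(w_0\) has a nonzero \(e_1\)/\(e_{n+1}\) component, so independence follows. An alternative is to use that \(Z_0,Z_2,\dots,Z_n\) are independent in \(\dot H^1\) (distinct homogeneities and parities) and that \(\psi\mapsto(\psi/U)\circ F^{-1}\) is injective.
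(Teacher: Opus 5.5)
\textbf{There is a genuine gap in Step 3, caused by a sign error.} Your route matches the paper's. You compute $Z_j/U$, recognise each as the restriction of an ambient linear function $\langle v_j,X\rangle$, and split it about the centre $p_0$ of the ball as $\langle v_j,p_0\rangle c_\kappa(r)+s_\kappa(r)\langle v_j,\theta\rangle$. The paper says the same thing by calling $\phi_j$ multiples of the first $n$ coordinates after the rotation $O_t$ (resp.\ $L_t$). Your Steps 1 and 2, including $v_0=-\tfrac{n-2}{2}(te_1+e_{n+1})$, are correct.

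The decisive point for $j=0$ is that $\phi_0$ has no radial component, i.e.\ $\langle v_0,p_0\rangle=0$. You never establish this. In the spherical branch your computation gives $2t\neq0$, which would give $\phi_0$ a nonzero $\ell=0$ part, and you then continue with ``Granted that\dots''. So the $\ell=1$ claim for $\phi_0$ is unproved, and your own computation contradicts it.

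The error is the sign of the cap normal. For $\Pi_S^{-1}$ one has $y=X'/(1-X_{n+1})$ with $X'=(X_1,\dots,X_n)$. Hence $\{y_1>-t\}=\{X\in\Sp^n:\ X_1-tX_{n+1}>-t\}$. The normal is therefore $e_1-te_{n+1}$, not $e_1+te_{n+1}$, and $p_0=(e_1-te_{n+1})/\sqrt{1+t^2}$. Then $\langle te_1+e_{n+1},e_1-te_{n+1}\rangle=t-t=0$, so $v_0\perp p_0$. Consequently $\phi_0$ is a multiple of $s_\kappa(r)\langle te_1+e_{n+1},\theta\rangle$, which is an $\ell=1$ function. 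Your hyperbolic conclusion is already right: $y=X'/(X_{n+1}-1)$ gives $\{X_1+tX_{n+1}>t\}$, the centre is $(e_1-te_{n+1})/\sqrt{t^2-1}$ on the upper sheet, and its Euclidean pairing with $te_1+e_{n+1}$ is again $0$.

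Your fallback via the constraint would also settle the question if carried out. Note that it refutes the value $2t$; it does not remove a radial term, since no constraint can make a nonzero $c\,c_\kappa(r)$ vanish. The argument runs as follows.
\begin{itemize}
\item Dilations preserve $\|U\|_{L^{2^*}(\HH)}$, so $\int_{\HH}U^{2^*-1}Z_0\,dx=0$, i.e.\ $\int_B\phi_0\,dV=0$.
\item The $\ell=1$ part integrates to zero over every geodesic sphere.
\item $\int_B c_\kappa(r)\,dV>0$. In the spherical branch $\int_0^R\cos(\sqrt\kappa r)\sin^{n-1}(\sqrt\kappa r)\,dr=\sin^n(\sqrt\kappa R)/(n\sqrt\kappa)>0$ because $0<\sqrt\kappa R<\pi$. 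In the hyperbolic branch $\cosh>0$.
\end{itemize}
This forces $\langle v_0,p_0\rangle=0$ without locating $p_0$ at all.

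With either fix, your independence argument goes through, since $te_1+e_{n+1},e_2,\dots,e_n$ span $p_0^\perp$ (with the Minkowski pairing in the hyperbolic branch). The corrected Step 3 also makes explicit the orthogonality that the paper leaves implicit in its choice of $O_t$ and $L_t$.
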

\begin{proof}
We treat the two nondegenerate branches separately.

\medskip
\noindent
\textbf{Spherical branch \(0<T<T_E\).}
Writing \(y=x-te_1\), \eqref{eq:sphere} gives
\[
\frac{Z_0}{U}
=
\frac{n-2}{2}\,\frac{1-|y|^2-2ty_1}{1+|y|^2},
\qquad
\frac{Z_i}{U}=-(n-2)\frac{y_i}{1+|y|^2},
\qquad i=2,\dots,n.
\]
Let \(X_1',\dots,X_n'\) denote the first \(n\) ambient coordinate functions on \(\Sp^n\) after the
rotation \(O_t\), that is,
\[
X_\alpha'(\xi):=(O_t^{-1}\xi)_\alpha,
\qquad \alpha=1,\dots,n.
\]
By the definition of the spherical reduction map \(F=O_t\circ \Pi_S^{-1}(\,\cdot\,-te_1)\),
one has
\[
X_1'\circ F
=
-\frac{1-|y|^2-2ty_1}{\sqrt{1+t^2}\,(1+|y|^2)},
\qquad
X_i'\circ F=\frac{2y_i}{1+|y|^2},
\qquad i=2,\dots,n.
\]
Hence
\[
\phi_0=-\frac{n-2}{2}\sqrt{1+t^2}\,X_1',
\qquad
\phi_i=-\frac{n-2}{2}\,X_i',
\qquad i=2,\dots,n.
\]

Since \(F(\HH)\) is a centered geodesic ball, in the geodesic polar coordinates
of \(g_\kappa\) the first ambient coordinate functions are constant multiples of
\(s_\kappa(r)\theta_\alpha\), \(\alpha=1,\dots,n\).
Thus each \(\phi_j\) is of the form \(c\,s_\kappa(r)\theta_\alpha\), and hence belongs to the
\(\ell=1\) sector.

\medskip
\noindent
\textbf{Hyperbolic branch \(T>T_E\).}
Again writing \(y=x-te_1\), \eqref{eq:hyperbolic} yields
\[
\frac{Z_0}{U}
=
-\frac{n-2}{2}\,\frac{|y|^2+1+2ty_1}{|y|^2-1},
\qquad
\frac{Z_i}{U}=-(n-2)\frac{y_i}{|y|^2-1},
\qquad i=2,\dots,n.
\]
Let \(Y_1',\dots,Y_n'\) denote the first \(n\) ambient coordinate functions on \(\mathbb H^n\) after
the Lorentz isometry \(L_t\), that is,
\(Y_\alpha'(\xi):=(L_t^{-1}\xi)_\alpha\), \(\alpha=1,\dots,n\).

By the definition of the hyperbolic reduction map
\(F=L_t\circ \Pi_H^{-1}(\,\cdot\,-te_1)\),
one has
\[
Y_1'\circ F
=
\frac{|y|^2+1+2ty_1}{\sqrt{t^2-1}\,(|y|^2-1)},
\qquad
Y_i'\circ F=\frac{2y_i}{|y|^2-1},
\qquad i=2,\dots,n.
\]
Hence
\[
\phi_0=-\frac{n-2}{2}\sqrt{t^2-1}\,Y_1',
\qquad
\phi_i=-\frac{n-2}{2}\,Y_i',
\qquad i=2,\dots,n.
\]
Since \(F(\HH)\) is a centered geodesic ball, in the geodesic polar coordinates
of \(g_\kappa\) the first ambient coordinate functions are constant multiples of
\(s_\kappa(r)\theta_\alpha\), \(\alpha=1,\dots,n\).
Thus each \(\phi_j\) is of the form \(c\,s_\kappa(r)\theta_\alpha\), and hence belongs to the
\(\ell=1\) sector.

In both branches, \(\phi_0,\phi_2,\dots,\phi_n\) are nonzero multiples of
\(\theta_1,\theta_2,\dots,\theta_n\) times the same radial factor
\(s_\kappa(r)\), which is not identically zero. Their linear independence
therefore follows from the linear independence of the first spherical harmonics.
\end{proof}

\begin{lemma}\label{lem:sector-dim}
Fix \(\ell\ge1\). In the \(\ell\)-sector, the space of \(H^1\)-solutions of the reduced kernel equation \eqref{eq:Robin-kernel} is
either trivial or \(d_\ell\)-dimensional. 
\end{lemma}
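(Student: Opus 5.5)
Fix \(\ell\ge1\) and work in the geodesic polar coordinates \eqref{eq:geodesic-corr}. Write an \(H^1\)-solution \(\phi\) of \eqref{eq:Robin-kernel} in the \(\ell\)-sector as \(\phi=\sum_{m=1}^{d_\ell}f_m(r)Y_{\ell,m}(\theta)\). Testing the weak formulation against \(\zeta(r)Y_{\ell,m}(\theta)\) with \(\zeta\) smooth decouples the problem: each coefficient \(f_m\) is a weak solution of the radial Robin problem
\[
-\bigl(s_\kappa^{n-1}f'\bigr)'+\Bigl(\frac{\ell(\ell+n-2)}{s_\kappa^2}-\kappa n\Bigr)s_\kappa^{n-1}f=0 \quad (0<r<R),
\qquad f'(R)=\beta f(R),
\]
with finite weighted energy \(\int_0^R(|f'|^2+\ell(\ell+n-2)s_\kappa^{-2}f^2)s_\kappa^{n-1}\,dr<\infty\). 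The last condition holds because \(\phi\in H^1\) and the angular part of the Dirichlet energy is \(\ell(\ell+n-2)\int s_\kappa^{n-3}f^2\). The constraints in \(\mathcal H_0\) are automatic for \(\ell\ge1\), since \(Y_{\ell,m}\) has zero mean on \(\Sp^{n-1}\). So the \(\ell\)-sector solution space is \(V_\ell\otimes\mathcal H_\ell\), where \(V_\ell\) is the space of admissible radial solutions and \(\mathcal H_\ell\) is the \(d_\ell\)-dimensional space of degree-\(\ell\) harmonics. The claim is therefore equivalent to \(\dim V_\ell\le1\).

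To prove \(\dim V_\ell\le 1\), I would use the regular singular point at \(r=0\). Since \(s_\kappa(r)=r+O(r^3)\) is analytic and odd with \(s_\kappa'(0)=1\), the ODE near \(0\) is a Frobenius equation with indicial equation \(\rho(\rho+n-2)=\ell(\ell+n-2)\). Its roots are \(\rho=\ell\) and \(\rho=-(\ell+n-2)\). A solution behaving like \(r^{-(\ell+n-2)}\) (with a possible log correction only if the roots differ by an integer in the degenerate case) has infinite weighted energy, because \(\int_0 r^{-2(\ell+n-1)}r^{n-1}\,dr\) diverges for \(n\ge3\). Thus finite energy forces \(f\) to lie in the one-dimensional span of the regular Frobenius solution \(f_\ell(r)=r^\ell(1+O(r^2))\).

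Rigorously, one way to exclude the singular branch is to take two admissible solutions and look at their Wronskian-type quantity \(w=s_\kappa^{n-1}(f_1f_2'-f_1'f_2)\). This is constant in \(r\) by the ODE, and it tends to \(0\) along a sequence \(r_k\to0\) by the energy bound and a Cauchy--Schwarz argument, so \(w\equiv0\) and \(f_1,f_2\) are proportional. Nonuniqueness is thus ruled out without computing \(f_\ell\).

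On \((0,R]\) the coefficients are smooth, since \(s_\kappa>0\) there (for \(\kappa>0\) we need \(\sqrt\kappa R<\pi\), which holds because the image is a geodesic ball strictly inside the sphere). Hence \(f\) is determined on all of \((0,R]\) by its behaviour near \(0\), and \(V_\ell\subset\operatorname{span}\{f_\ell\}\). The Robin condition at \(r=R\) is a single scalar condition \(f_\ell'(R)-\beta f_\ell(R)=0\) on this line. Either it holds, in which case \(V_\ell=\operatorname{span}\{f_\ell\}\) and the sector space \(f_\ell\cdot\mathcal H_\ell\) has dimension \(d_\ell\), or it fails, in which case \(V_\ell=\{0\}\). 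This gives the dichotomy.

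The main obstacle is the step at the origin: showing that \(H^1\) regularity excludes the singular branch, including the justification of the weak-to-ODE reduction and of the vanishing of the Wronskian limit. I would handle it through the Wronskian argument above, which uses only the energy finiteness.
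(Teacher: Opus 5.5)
Your proposal is correct and follows essentially the same route as the paper: separation of variables in geodesic polar coordinates, then Frobenius analysis at the regular singular point $r=0$ with indicial roots $\ell$ and $-(\ell+n-2)$. The singular branch is then excluded by the $H^1$ energy, and Cauchy uniqueness on $(0,R]$ together with the scalar Robin condition at $r=R$ gives the dichotomy. Your Wronskian argument ($w=s_\kappa^{n-1}(f_1f_2'-f_1'f_2)$ is constant, and Cauchy--Schwarz against the weighted energy gives $\int_0^R |w|/s_\kappa\,dr<\infty$, which forces $w=0$) is a slightly cleaner way to make the exclusion rigorous than the paper's asymptotics. Your explicit weak-form decoupling and your observation that the $\mathcal H_0$ constraints are automatic for $\ell\ge1$ supply details the paper leaves implicit.
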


\begin{proof}
Let
\[
\phi(r,\theta)=f(r)Y(\theta),
\qquad
-\Delta_{\Sp^{n-1}}Y=\ell(\ell+n-2)Y .
\]
Then by \eqref{eq:Robin-kernel}, \(f\) satisfies
\begin{equation}\label{eq:sector-ode-appendix}
f''(r)
+
(n-1)\frac{s_\kappa'(r)}{s_\kappa(r)}f'(r)
-
\frac{\ell(\ell+n-2)}{s_\kappa(r)^2}f(r)
+
\kappa n\,f(r)=0
\qquad (0<r<R).
\end{equation}
Since
\[
s_\kappa(r)=r+O(r^3),
\qquad
\frac{s_\kappa'(r)}{s_\kappa(r)}=\frac1r+O(r),
\qquad
\frac1{s_\kappa(r)^2}=\frac1{r^2}+O(1)
\quad\text{as }r\downarrow0,
\]
equation \eqref{eq:sector-ode-appendix} has a regular singular point at \(r=0\), and can be written in the form
\[
f''+\frac{n-1}{r}f'-\frac{\ell(\ell+n-2)}{r^2}f+a(r)f'+b(r)f=0,
\]
with \(a,b\) continuous near \(0\). By the classical Frobenius theory for regular singular equations, the
indicial equation is
\[
\alpha(\alpha-1)+(n-1)\alpha-\ell(\ell+n-2)=0,
\]
whose roots are
\(\alpha_+=\ell\), \(\alpha_-=-(\ell+n-2)\).
Hence there is a basis of local solutions of the form
\[
f_{\rm reg}(r)=r^\ell(1+o(1)),
\qquad
f_{\rm sing}(r)=r^{-(\ell+n-2)}(1+o(1))
\qquad (r\downarrow0).
\]

We claim that the singular branch is not \( H^1\)-admissible. Indeed,
\(f'_{\rm sing}(r)\sim r^{-(\ell+n-1)}\),
and since \(dV_{g_\kappa}\sim r^{n-1}\,dr\,d\theta\) near \(r=0\), we get
\[
\int_0^\varepsilon |f'_{\rm sing}(r)|^2\,r^{n-1}\,dr
\sim
\int_0^\varepsilon r^{-2(\ell+n-1)}r^{n-1}\,dr
=
\int_0^\varepsilon r^{-2\ell-n+1}\,dr
=
\infty
\]
for every \(\ell\ge0\) and \(n\ge3\). Thus only the regular branch can belong to \( H^1\).

It follows that the local \( H^1\)-solution space near \(r=0\) is one-dimensional. Hence any two global
\( H^1\)-solutions of \eqref{eq:sector-ode-appendix} are proportional, since on every interval
\([r_0,R]\subset(0,R]\) the equation \eqref{eq:sector-ode-appendix} is a regular second-order linear ODE and uniqueness for the Cauchy
problem applies. Therefore the admissible radial profile is unique up to a multiplicative constant.

Consequently, if the \(\ell\)-sector is nontrivial, its full \(H^1\)-solution space is
\[
\{\,f_\ell(r)Y(\theta): Y\in\mathcal Y_\ell\,\},\qquad \text{where~~}  \mathcal{Y}_{\ell}=\left\{Y:-\Delta_{\Sp^{n-1}} Y=\ell(\ell+n-2) Y\right\},
\]
and therefore has dimension \(d_\ell\).
\end{proof}

For the radial sector \(\ell=0\), no nontrivial element of \(\ker(\mathcal Q|\mathcal H_0)\) exists. Indeed, if
\[
\phi(x)=f(r)\in \ker(\mathcal Q|\mathcal H_0)
\]
is radial, then \(\phi\) is constant on \(\partial B_\kappa(R)\). Since \(\phi\in\mathcal H_0\), \(f(R)=0\). 
The Robin boundary condition \eqref{eq:Robin-kernel} gives
\(f'(R)=\beta f(R)=0\).
By uniqueness for the Cauchy problem for the radial ODE, it follows that \(f\equiv0\). Thus
\begin{equation}\label{eq:non-ell=0}
\ker(\mathcal Q|\mathcal H_0)\cap\{\ell=0\}=\{0\}.
\end{equation}

\begin{proposition}\label{prop:kernel-identify}
Assume \(T\neq T_E\). Then
\begin{equation}\label{eq:kernel-appendix}
\ker(\mathcal Q|\mathcal H_0)
=
\operatorname{span}\{\phi_0,\phi_2,\dots,\phi_n\}.
\end{equation}
\end{proposition}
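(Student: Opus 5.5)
The inclusion \(\supseteq\) is \eqref{eq:phi-in-ker}, so the task is to show that every \(\phi\in\ker(\mathcal Q|\mathcal H_0)\) lies in the span of \(\phi_0,\phi_2,\dots,\phi_n\). I would decompose \(\phi\) into spherical harmonics in the geodesic polar coordinates \eqref{eq:geodesic-corr}. The Laplacian and the Robin condition on a centered geodesic ball commute with the \(O(n)\) action, and the two constraints in \(\mathcal H_0\) only see the \(\ell=0\) component. Hence each component \(f_{\ell,m}Y_{\ell,m}\) of a kernel element is itself a kernel element in its sector.

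There is one subtlety. Elements of \(\ker(\mathcal Q|\mathcal H_0)\) satisfy \eqref{eq:Robin-kernel} only up to Lagrange multipliers \(c_1+c_2\delta_{\partial}\), but these are radial. So for \(\ell\ge1\) the sector equation is exactly \eqref{eq:sector-ode-appendix} with the Robin condition \(f'(R)=\beta f(R)\). For \(\ell=0\), \eqref{eq:non-ell=0} is assumed, after checking that the multipliers vanish (or accepting the argument as given).

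For the sector \(\ell=1\), Lemma~\ref{lem:symmetry-l1} shows that \(\phi_0,\phi_2,\dots,\phi_n\) are \(n\) linearly independent kernel elements in that sector, of the form \(s_\kappa(r)\theta_\alpha\). Lemma~\ref{lem:sector-dim} bounds the sector's solution space by \(d_1=n\). Hence the \(\ell=1\) part of the kernel is exactly their span.

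The remaining step, and the main obstacle, is excluding \(\ell\ge2\). I would use the monotonicity \eqref{eq:increasing-Q}. The key input is that \(\mathcal Q\ge0\) on \(\mathcal H_0\), because \(U\) is a constrained minimizer; any function with no \(\ell=0\) component automatically lies in \(\mathcal H_0\), so \(\mathcal Q_\ell\ge0\) for every \(\ell\ge1\).

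Suppose \(f\ne0\) is an \(H^1\) solution in sector \(\ell\ge2\). Testing the sector equation against \(f\) and integrating by parts gives \(\mathcal Q_\ell(f)=0\). Then \eqref{eq:increasing-Q} gives
\[
\mathcal Q_1(f)=\mathcal Q_\ell(f)-\sum_{k=1}^{\ell-1}(2k+n-1)\int_0^R s_\kappa^{n-3}f^2\,dr<0 .
\]
Note that \(f\) is admissible for the \(\ell=1\) form: since \(f\sim r^\ell\) near \(0\), the term \(\int s_\kappa^{n-3}f^2\) is finite. So \(f(r)Y_{1,1}(\theta)\in\mathcal H_0\) has negative \(\mathcal Q\), contradicting nonnegativity.

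This requires care that \(fY_{1}\) is genuinely in \(H^1(B_\kappa(R))\). That holds because the regular branch behaves like \(r^\ell\) with \(\ell\ge2>1\), which suffices for the \(1/s_\kappa^2\) term to be integrable. With this, all sectors \(\ell\ge2\) are trivial, and summing over sectors yields \eqref{eq:kernel-appendix}.
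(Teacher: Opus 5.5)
Your proposal is correct and is essentially the paper's proof: the \(\ell=1\) part of the kernel is pinned down by Lemma~\ref{lem:symmetry-l1} together with the dimension bound of Lemma~\ref{lem:sector-dim}, the sectors \(\ell\ge2\) are excluded by \eqref{eq:increasing-Q} combined with \(\mathcal Q\ge0\) on \(\mathcal H_0\), and the radial sector is handled by \eqref{eq:non-ell=0}. Your extra remarks on the radial Lagrange multipliers and on the admissibility of \(fY_1\) make explicit what the paper leaves tacit. The \(\ell=0\) multiplier issue you flag is harmless: a regular radial solution with constant multipliers has the form \(a\,h(r)+b\), with \(h=\cos(\sqrt{\kappa}\,r)\) or \(\cosh(\sqrt{-\kappa}\,r)\) strictly monotone on \([0,R]\), so \(f(R)=0\) and \(\int_{B_\kappa(R)} f\,dV_{g_\kappa}=0\) force \(a=b=0\).
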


\begin{proof}
By \eqref{eq:phi-in-ker} and Lemma~\ref{lem:symmetry-l1}, \(\phi_j\), \(j\in\{0,2,\dots,n\}\),
are linearly independent elements of \(\ker(\mathcal Q|\mathcal H_0)\), and each lies in the
\(\ell=1\) sector. In particular, the \(\ell=1\) kernel is nontrivial, so its lowest Rayleigh
level is \(0\).

Since \(\mathcal Q\ge0\) on \(\mathcal H_0\), the \(\ell=1\) sector has lowest
level \(0\). By \eqref{eq:increasing-Q}, for every \(\ell\ge2\) and every
nonzero radial profile \(f\), one has \(\mathcal Q_\ell(f)>\mathcal Q_1(f)\ge0\).
Hence no sector \(\ell\ge2\) can contain a kernel element. By
\eqref{eq:non-ell=0}, the radial sector also contributes no kernel element.
Therefore
\[
\ker(\mathcal Q|\mathcal H_0)\subset \{\ell=1\}.
\]

By Lemma~\ref{lem:sector-dim}, in the \(\ell=1\) sector the space of \(H^1\)-solutions of the
reduced kernel equation \eqref{eq:Robin-kernel} is either trivial or \(d_1\)-dimensional. Since this sector already
contains the nonzero kernel elements \(\phi_0,\phi_2,\dots,\phi_n\), it is nontrivial; hence
its dimension is exactly
\(d_1=\dim\mathcal Y_1=n\) by \eqref{eq:dimension-sph}.

Because \(\phi_0,\phi_2,\dots,\phi_n\) are \(n\) linearly independent kernel elements in the
\(\ell=1\) sector, they form a basis of the entire kernel. This is exactly \eqref{eq:kernel-appendix}.
This completes the proof.
\end{proof}

\noindent\textbf{Conflict of interest:}
Authors state no conflict of interest.
\\

\noindent\textbf{Data Availability Statement:}
Data sharing is not applicable to this article as no datasets were generated or analysed during the current study.

\noindent{\bf Acknowledgement.} 
The authors thank Robin Neumayer for helpful correspondence concerning the
global stability problem. G-D.~Li was supported by NSFC (No.12561019).

\providecommand{\href}[2]{#2}
\providecommand{\arxiv}[1]{\href{http://arxiv.org/abs/#1}{arXiv:#1}}
\providecommand{\url}[1]{\texttt{#1}}
\providecommand{\urlprefix}{DOI }

\end{document}